\journalname{Journal of Mathematical Imaging and Vision}
\tikzset
{
  block/.style = {shape=rectangle, rounded corners,
                  draw, anchor=center, minimum height = 0em,
                  align=center, minimum width = 2em,
                  inner sep=1ex},
  operator/.style = {block, circle, inner sep = .3ex, minimum width = 0em, 
                     minimum height = 0em},
  summary/.style = {block, fill=black!10, rounded corners=0cm, minimum height = 
  0em,}
}
\newcommand{\edge}[2]{\draw (#1) edge node (TMP) {} (#2);}
\newcommand{\layername}[4]{\path (#1) -- node (TMP) {} (#2);
                           \draw node [left of=TMP, xshift=#4] {#3};}
\newcommand{\edgelabel}[1]{\draw node [right of=TMP, anchor=west, 
                           xshift=-2.2em] {#1};}
\newcommand{\edgelabelleft}[1]{\draw node [left of=TMP, anchor=east, 
                               xshift=+2.2em] {#1};}
\newcommand{\skipedge}[3] {\draw (#1) -- +(#3,0) |-
                           node (TMP) {}
                           node [near start,left] {Id} (#2);}
\newcommand{\emptyskipedge}[3] {\draw (#1) -- +(#3,0) |-
                           node (TMP) {}
                           (#2);}
\newcommand{\nor}[1]{||#1||_2}
\newcommand\Tstrut{\rule{0pt}{2.6ex}}         
\newcommand\Bstrut{\rule[-0.9ex]{0pt}{0pt}}   
\def\STANDARDRESNET{0}
\def\STABLERESNET{1}
\def\DUFORTFRANKEL{2}
\def\FSI{3}
\def\CHARBONNIER{1}
\def\PERONAMALIK{2}
\def\RELU{4}
\def\UNCOUPLED{1}
\def\bestHDpsnr{31.20}
\def\bestCHpsnr{36.25}
\def\bestPMpsnr{37.22}
\begin{document}
\title{Connections between Numerical Algorithms for PDEs and Neural Networks
\thanks{This 
work has received funding from the European Research Council (ERC) 
under the European Union's Horizon 2020 research and innovation programme 
(grant agreement no. 741215, ERC Advanced Grant INCOVID).}
}
\subtitle{}


\author{Tobias Alt \and
        Karl Schrader \and 
        Matthias Augustin \and
        Pascal Peter \and 
        Joachim Weickert}


\institute{All authors are with the
Mathematical Image Analysis Group,
Faculty of Mathematics and Computer Science,
Campus E1.7, Saarland University,
66041 Saarbr\"ucken, Germany.\\
\email{alt@mia.uni-saarland.de}
}

\date{Received: date / Accepted: date}

\maketitle

\begin{abstract}
We investigate numerous structural connections between numerical algorithms for 
partial differential equations (PDEs) and neural architectures. Our goal is to 
transfer the rich set of mathematical foundations from the world of PDEs to 
neural networks. Besides structural insights we provide concrete 
examples and experimental evaluations of the resulting architectures. 
Using the example of generalised nonlinear diffusion in 1D, we 
consider explicit schemes, acceleration strategies thereof, implicit schemes, 
and multigrid approaches. We connect these concepts to 
residual networks, recurrent neural networks, and U-net architectures. Our 
findings inspire a symmetric residual network design with provable stability 
guarantees and justify the effectiveness of skip connections in neural networks 
from a numerical perspective. Moreover, we present U-net 
architectures that implement multigrid techniques for 
learning efficient solutions of partial differential equation models, and 
motivate uncommon design choices such as trainable nonmonotone activation 
functions. Experimental evaluations show that the proposed architectures save 
half of the trainable parameters and can thus outperform standard ones with the 
same model complexity. Our considerations serve as a basis for explaining the 
success of popular neural architectures and provide a blueprint for developing 
new mathematically well-founded neural building blocks.
\keywords{numerical algorithms \and 
          partial differential equations \and 
          neural networks \and
          nonlinear diffusion \and
          stability}
\end{abstract}

\section{Introduction} 
Partial differential equations (PDEs) have been a central component of many 
successful models for signal and image processing in the last three decades; 
for instance, the monographs~\cite{AK06,CS05a,We97} provide a good overview. 

PDE-based models are compact, transparent, and benefit from a rich set of 
mathematical foundations. As a consequence, they offer valuable theoretical 
guarantees such as stability and well-posedness. This makes PDE models and 
their numerical solution strategies easy to control, implement, and apply to a 
plethora of tasks. 

Convolutional neural networks (CNNs) and deep learning 
~\cite{GBC16,LBH15,LBBH98,Sch15a}, on the other hand, have revolutionized the 
field of image processing in recent years. Still, this success 
was mostly of empirical nature. Many modern CNN architectures do not provide 
solid mathematical foundations. They often suffer from undesirable side effects 
such as a sensitivity against adversarial attacks~\cite{GSS15}. 

More recently, researchers have started to analyse the behaviour and 
mathematical foundations of CNNs. One strategy is to interpret trained 
CNNs as approximations of continuous partial or ordinary differential equations 
(ODEs)~\cite{CRBD18,CP16,DSBP21}. In this interpretation, the trainable 
parameters specify the nonlinear dynamics of an evolution equation. 

This strategy can be challenging for several reasons. Firstly, finding a 
compact differential equation is hard, given that typical CNNs learn millions 
of parameters. Secondly, the reduction of a discrete CNN to a continuous 
differential equation involves ambiguous limit assumptions, as the same 
discrete model can approximate multiple evolution equations with 
different orders of consistency. Last but not least, this strategy is only 
analytic: It analyses existing networks rather than inspiring 
novel, well-founded building blocks.

We address these problems by pursuing the opposite direction: We translate 
successful concepts from the world of PDEs into neural components. This 
translation justifies neural architectures from a mathematical perspective and 
provides novel design criteria for well-founded networks. In addition, these 
networks are naturally more compact with less trainable parameters.

Our concepts of choice are numerical algorithms instead of continuous 
differential equations. Similar to untrained neural networks, numerical 
algorithms can be applied to a multitude of problems in a general purpose 
fashion. The model at hand is then specified by the differential equation,
which is approximated by training the neural network. Thus, we believe that the 
design principles of modern neural networks realise a small but powerful set of 
numerical strategies at their core.  

\subsection{Our Contributions}
We investigate what can be learned from translating numerical methods for PDEs 
into their neural counterparts. This inspires novel building blocks for 
designing mathematically well-founded neural networks. 

The present paper advances our conference publication~\cite{APWS21}, 
in which we translated explicit schemes, acceleration strategies~\cite{HOWR16}, 
implicit schemes, and linear multigrid approaches~\cite{Br77,BHM00} into their 
neural counterparts. In this extended version, we additionally investigate 
networks that realise Du Fort--Frankel schemes~\cite{DF53} as a representative 
for absolutely stable schemes which are still explicit. Moreover, we extend the 
translation of multigrid approaches into U-nets to the nonlinear setting by 
considering full approximation schemes (FAS). Last but not least, we supplement 
our findings with an experimental evaluation of the proposed network 
architectures for denoising and inpainting tasks. Therein, we demonstrate the 
effectiveness of the architectures together with nonmonotone activation 
functions in practice.

As a starting point of our considerations, we consider a 
generalised nonlinear diffusion equation. We restrict ourselves to the 1D 
setting for didactic purposes only, since all important concepts can already be 
translated in this simple setting. We show that an explicit discretisation of 
this diffusion model can be connected to residual networks (ResNets) 
\cite{HZRS16}, which are among the most popular network architectures to date. 
Their skip connection can be interpreted as the result of a temporal 
discretisation, which allows to connect them to explicit diffusion schemes. 

This connection inspires a novel ResNet architecture. It follows a symmetric 
structure which saves half the amount of network parameters and additionally 
allows to derive a theory for guaranteeing stability in the Euclidean norm.
Moreover, by identifying the diffusion flux with the 
activation function our translation motivates the use of nonmonotone activation 
functions, which are atypical in the CNN world. In a series of denoising 
experiments, we validate the effectiveness of such activation functions. We 
choose the denoising problem since it is the prototypical representative of a 
well-posed problem, where the result depends continuously on the input data. 

By considering acceleration strategies for explicit schemes and solution 
strategies for implicit schemes, we justify the effectiveness of skip 
connections in neural networks. We show that Du Fort--Frankel schemes 
\cite{DF53}, fast semi-iterative (FSI) schemes~\cite{HOWR16}, and fixed point 
iterations for implicit schemes motivate different architectural designs which 
all rely on skip connections as a foundation of their efficiency.

Finally, we consider the rich class of multigrid approaches~\cite{Br77,BHM00}. 
We show that a nonlinear full approximation scheme (FAS) can be cast in the 
form of the popular U-net~\cite{RFB15} architecture. We think that at their 
core, U-nets realise a multigrid strategy, and we support this claim by 
proposing a U-net which realises a full multigrid strategy for an inpainting 
task.

Our findings do not only inspire new design criteria for stable neural 
architectures and show that uncommon design choices can perform well in 
practice. They also provide structural insights into the success of popular CNN 
architectures from the perspective of numerical algorithms.

\subsection{Related Work}
In recent years, the connection between neural networks and the world of PDEs 
and variational methods has become an active area of research.

CNNs are used to learn PDEs from data~\cite{LLD19,RMMW20,RBPK17,Sch17}, 
or to solve them efficiently~\cite{CP20,EHJ20,RPK19}. Moreover, various 
model-based approaches have been augmented with trainable parameters to improve 
their performance~\cite{AW21,AH20,CP16,KEKP20,KKHP17}. 
Another line of research is concerned with the  expressive power of networks 
\cite{DDFH21,GKNV21,KPRS21,PN21,RT18,TG19} 
and their robustness properties~\cite{CAH19,GMM20,LWF21}. 

The concept of neural ordinary equations~\cite{CRBD18} as a continuous 
time extension of ResNets~\cite{HZRS16} has gained considerable attention. 
However, recent works~\cite{GKDC21,OKHT21} suggest that these architectures 
suffer from a strong dependency between the model and the numerical solver. 
This supports our motivation to regard the numerical solver as an inherent 
basis of a neural architecture. 

In contrast, our philosophy of translating numerical concepts into neural 
architectures is shared only by few 
works~\cite{BCEO21,LHL20,LZLD18,OPF19,ZCF19}. They 
motivate additional or modified skip connections based on 
numerical schemes for ODEs, such as Runge-Kutta methods or implicit Euler 
schemes. Our work provides additional motivations for such skip connections 
based on several numerical strategies for PDEs.

The stability of ResNets has been analysed in several works
\cite{CMHR18,HLTR19,HR17,RDF20,RH20,ZS20}. A common result is that 
ResNets with a symmetric filter structure can be shown to be stable in the 
Euclidean norm. We motivate this result from a novel viewpoint based on 
diffusion processes. In contrast to previous results, this unique starting 
point allows us to present our stability result independently of the 
monotonicity of the activation function, inspiring the use of nonmonotone and 
trainable activation functions. 

Nonmonotone activation functions are rarely found in standard CNNs, with some 
notable exceptions~\cite{CP16,GWMC13,OMLM18}. Recently, the so-called Swish 
activation~\cite{RZL17} and modifications thereof~\cite{Mi20,ZMWZ21} have been 
found to empirically boost the classification performance of CNNs.
While these activations are modifications of the ReLU activation which are 
nonmonotone around the zero position, the activations that arise from our 
diffusion interpretation are odd and nonmonotone. Such functions have 
been analysed before the advent of deep learning~\cite{DMNP93,MR94}, but have 
not found their way into current CNN architectures. Our experiments, however, 
suggest that these activations can be advantageous in practice. 

Multigrid ideas have been combined with CNNs already in the early years of 
neural network research~\cite{Ba97,BMS93}. Current works use inspiration from 
multigrid concepts to learn restriction and prolongation operators of multigrid 
solvers~\cite{GGKY19,KDO17}, to couple channels for parameter reduction 
\cite{EERT20} or to boost training performance~\cite{HRHJ18,GRSC20}.

However, to the best of our knowledge, the only architectures 
that consequently implement a trainable multigrid approach are presented by He 
and Xu~\cite{HX19} and Hartmann et al.~\cite{HLMR20}. However, both works do 
not draw any connections to the popular U-net architecture~\cite{RFB15}, 
whereas we directly link both concepts.

\subsection{Organisation of the Paper}
In Section~\ref{sec:diffusionreview}, we review nonlinear diffusion and 
residual networks. We connect both models in Section~\ref{sec:diffusion}
and analyse the implications in terms of stability and novel activation 
functions. Afterwards we motivate skip connections from different numerical 
algorithms in Section~\ref{sec:skip}. We review multigrid approaches and U-nets 
in Section~\ref{sec:multigridreview} before connecting both worlds in 
Section~\ref{sec:multigrid}. Finally, we experimentally evaluate the proposed 
architectures in Section~\ref{sec:exp} and present a discussion and our 
conclusions in Section~\ref{sec:conc}.

\section{Review: Diffusion and Residual Networks}\label{sec:diffusionreview}

In this section, we review generalised diffusion filters in 1D and residual 
networks as the basic models for our first translation. We restrict ourselves 
to the 1D setting only for didactic reasons, as already this simple setting 
allows to translate all necessary concepts.

\subsection{Generalised Nonlinear Diffusion}
We start by considering a generalised one-dimensional diffusion PDE of 
arbitrary high order. It produces signals 
$u(x, t): (a,b) \times [0, \infty) \rightarrow \mathbb{R}$ evolving over time 
from an initial signal $f(x)$ on a domain $(a,b) \subset \mathbb{R}$ according 
to
\begin{equation}\label{eq:nldiff}
  \partial_t u = -\mathcal{D}^* \! \left(g\!\left(|\mathcal{D} u|^2\right) 
                                         \mathcal{D} u\right),
\end{equation}
with reflecting (homogeneous Neumann) boundary conditions. We use a 
general differential operator 
\begin{equation}
\mathcal{D} = \sum_{m=0}^M \alpha_m\partial_x^m
\end{equation} 
and its adjoint 
\begin{equation}
\mathcal D^*=\sum_{m=0}^M (-1)^m \alpha_m\partial_x^m.
\end{equation}
The operators consist of weighted derivatives up to order $M$ with weights 
$\alpha_m$ of arbitrary sign, yielding a PDE of order $2M$. 

Choosing e.g. $M=1$ yields the second order PDE of 
Perona and Malik~\cite{PM90}, while $M=2$ leads to a one-dimensional version of 
the fourth order model of You and Kaveh~\cite{YK00}. 

The evolution simplifies the input signal $f$ over time. This process is 
mainly controlled by the scalar \emph{diffusivity} function $g(s^2)$. For 
example, the Perona--Malik diffusivity~\cite{PM90} 
\begin{equation}\label{eq:peronamalik}
g(s^2) = \frac{1}{1 + \frac{s^2}{\lambda^2}}
\end{equation}
preserves discontinuities which are larger than a contrast 
parameter $\lambda$.

The diffusion PDE \eqref{eq:nldiff} is the gradient flow which minimises the 
energy functional
\begin{equation}
E(u) = \int_a^b \Psi(|\mathcal{D} u|^2)\, dx,
\end{equation}
where the penaliser $\Psi$ can be linked to the diffusivity with $g=\Psi'$ 
\cite{SchW98}. The penaliser must be increasing, but not necessarily convex. In 
Section~\ref{sec:nonmonotone}, we show that this inspires
novel activation functions. Their discretisations are stable, despite arising 
from a nonconvex energy.

\subsection{Residual Networks}
Residual Networks (ResNets)~\cite{HZRS16} belong to the most popular CNN 
architectures to date. Their main contribution is the introduction of 
so-called \emph{skip connections} which facilitate training of very deep 
networks. 

A residual network consists of residual blocks. A single 
block computes an output signal $\bm u$ from an input $\bm f$ as
\begin{equation}\label{eq:resblock}
\bm u = \sigma_2\!\left(\bm f + \bm W_2 \,\sigma_1\!\left(\bm W_1 \bm f + \bm 
b_1\right) + \bm b_2\right).
\end{equation}
One first applies an inner convolution $\bm W_1$ with a bias vector $\bm b_1$ 
to the input signal and passes the result into an inner \emph{activation} 
function $\sigma_1$.

Typically, CNNs prescribe simple, monotone activation functions such as the 
rectified linear unit (ReLU) \cite{NH10} function 
\begin{equation}
\text{ReLU}(s) = \text{max}(0,s)
\end{equation}
which is a linear function truncated at $0$. 

Afterwards an outer convolution $\bm W_2$ with a bias vector $\bm b_2$ is 
applied to the output of the activation.

The result of this convolution 
is added back to the original signal $\bm f$. This \emph{skip connection} is 
the crucial novelty of ResNets over feed-forward networks. It is the key to 
efficiently train deep networks with large amounts of layers, without suffering 
from the vanishing gradient problem. This phenomenon appears when 
backpropagation gradients approach zero for very deep networks, bringing 
the training process to a halt~\cite{BSF94}.  

Lastly, one applies an outer activation function $\sigma_2$ to obtain the
output signal $\bm u$.

\section{From Diffusion to Symmetric Residual Networks}\label{sec:diffusion}
We are now in the position to show that explicit diffusion schemes realise a 
ResNet architecture with a symmetric filter structure. To this end, we rewrite 
the generalised nonlinear diffusion equation \eqref{eq:nldiff} with the help of 
the flux function 
\begin{equation}
  \Phi(s) = g(s^2) \, s
\end{equation}
as 
\begin{equation}
  \partial_t u = -\mathcal{D}^* \Phi\!\left(\mathcal{D} u\right).
\end{equation}

Now we discretise this equation by means of a standard finite difference 
scheme. To obtain discrete signals $\bm u, \bm f$, we sample the continuous 
signals $u,f$ with distance $h$. We discretise the temporal 
derivative by a forward difference 
with time step size $\tau$. The spatial derivative operator $\mathcal D$ is 
implemented by a convolution matrix $\bm K$. 
Consequently, the adjoint operator $\mathcal D^*$ is realised by a transposed 
convolution matrix $\bm K^\top$. The matrix transposition 
corresponds mirroring the corresponding discrete convolution kernel. 

This yields the discrete evolution equation 
\begin{equation}
  \frac{\bm u^{k+1} - \bm u^k}{\tau} = - \bm K^\top
                                        \bm \Phi \! \left(\bm K \bm u^k\right),
\end{equation}
where we indicate old and new time levels by superscripts $k$ and $k+1$, 
respectively. Solving this expression for the new signal $\bm u^{k+1}$ yields 
the explicit scheme 
\begin{equation}\label{eq:explicit}
  \bm u^{k+1} = \bm u^k - \tau \bm K^\top
                          \bm \Phi \! \left(\bm K \bm u^k\right).
\end{equation}

The explicit diffusion scheme \eqref{eq:explicit} is closely connected to a 
specific residual block. To this end, we consider a residual block with input 
$\bm u^k$, output $\bm u^{k+1}$, and without bias terms, which reads
\begin{equation}
  \bm u^{k+1} = \sigma_2 \! \left(
                              \bm u^k + \bm W_2 \, \sigma_1 
                              \! \left(\bm W_1 \bm u^k\right) 
                           \right).
\end{equation}
Now, we can directly identify the explicit scheme with a residual block as 
follows.

\begin{theorem}[Diffusion-inspired ResNets]
An explicit step \eqref{eq:explicit} of the generalised higher order diffusion 
scheme \eqref{eq:nldiff} can be expressed as a residual block 
\eqref{eq:resblock} by 
\begin{equation}\label{eq:translation}
  \sigma_1 = \tau \, \bm\Phi, \quad
  \sigma_2 = \textup{Id}, \quad
  \bm W_1 = \bm K, \quad
  \bm W_2 = -\bm K^\top,
\end{equation}
with the bias vectors $\bm{b}_1$, $\bm{b}_2$ set to $\bm 0$. 
\end{theorem}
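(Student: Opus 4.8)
The plan is to prove the statement by direct substitution, since the claim is an algebraic identity between two update formulas rather than an assertion requiring estimates. First I would start from the general residual block \eqref{eq:resblock}, specialise it to the bias-free case $\bm b_1 = \bm b_2 = \bm 0$, and rename input and output as $\bm f = \bm u^k$ and $\bm u = \bm u^{k+1}$, so that the block reads $\bm u^{k+1} = \sigma_2\!\left(\bm u^k + \bm W_2\,\sigma_1\!\left(\bm W_1 \bm u^k\right)\right)$.

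Next I would insert the four identifications from \eqref{eq:translation} and simplify from the inside out. Choosing $\bm W_1 = \bm K$ turns the inner argument into $\bm K \bm u^k$; applying $\sigma_1 = \tau\,\bm\Phi$ — understood as the componentwise scalar flux $\Phi$ scaled by the constant $\tau$ — yields $\tau\,\bm\Phi\!\left(\bm K\bm u^k\right)$; multiplying by $\bm W_2 = -\bm K^\top$ and adding the skip term gives $\bm u^k - \tau\,\bm K^\top\bm\Phi\!\left(\bm K\bm u^k\right)$; and finally $\sigma_2 = \textup{Id}$ leaves this expression unchanged. The outcome is exactly the explicit scheme \eqref{eq:explicit}, which completes the verification.

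There is essentially no hard step here; the only points that deserve a remark are (i) that scaling the activation by the constant time step $\tau$ is admissible, and that the placement of $\tau$ is not unique — it could equally be absorbed into $\bm W_1$ or $\bm W_2$ — and (ii) that the discrete adjoint $\bm K^\top$ is the correct counterpart of $\bm K$, which is precisely the observation recalled just before the theorem that transposing a convolution matrix amounts to mirroring the convolution kernel and hence realises the adjoint operator $\mathcal{D}^*$ at the discrete level. I would close by stressing that this identification is not merely a coincidence of formulas but a structural match: the skip connection plays the role of the forward temporal discretisation, the two convolutions realise $\mathcal{D}$ and $\mathcal{D}^*$, and the activation realises the diffusion flux $\Phi$.
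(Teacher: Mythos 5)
Your proposal is correct and follows essentially the same route as the paper, which establishes the theorem by the same direct identification: substituting $\bm W_1 = \bm K$, $\sigma_1 = \tau\,\bm\Phi$, $\bm W_2 = -\bm K^\top$, $\sigma_2 = \textup{Id}$ and zero biases into the bias-free residual block and observing that it reproduces the explicit scheme \eqref{eq:explicit}. Your added remarks on the non-uniqueness of the placement of $\tau$ and on $\bm K^\top$ realising the discrete adjoint are consistent with the paper's discussion and require no changes.
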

We call a ResNet block of this form a \emph{diffusion 
block}. Figure~\ref{fig:diffusion_block} visualises such a 
block in the form of a graph. Nodes contain the current state of the signal, 
while edges describe the operations to proceed from one node to the next. 


\begin{figure}[t]
  \centering
  \resizebox{0.8\linewidth}{!}{\begin{tikzpicture}[-latex]
  \matrix (m)
  [
    matrix of nodes,
    column 1/.style = {nodes={block}}
  ]
  {
    |(input)|          
    $\bm u^k$
    \\[4.5ex]            
    |(fwd)|
    $\bm K \bm u^k$%
    \\[4.5ex]
    |(flux)|
    $\tau \, \bm\Phi\!\left(\bm K \bm u^k\right)$%
    \\[4.5ex]
    |(bwd)|
    $-\tau \, \bm K^\top \bm\Phi\!\left(\bm K \bm u^k\right)$%
    \\[2ex]
    |[operator] (add)|
    $+$%
    \\[2ex]
    |(output)|         
    $\bm u^{k+1}$%
    \\
  };

    \def\nameshift{-1.8em}                    
    \def\skipconnshift{2.0}                     

    \layername{input}{fwd}{convolution}{\nameshift}   
    \edge{input}{fwd}
    \edgelabel{$\bm K$}
    
    \layername{fwd}{flux}{activation}{\nameshift}      
    \edge{fwd}{flux}     
    \edgelabel{$\tau \bm\Phi(\cdot)$}           
                                              
    \layername{flux}{bwd}{convolution}{\nameshift}                             
    \edge{flux}{bwd}          
    \edgelabel{$-\bm K^\top$}

    \node[left of=add, xshift=-4ex] {skip connection};
    \edge{bwd}{add}                             

    \edge{add}{output}
  
    \skipedge{input}{add}{\skipconnshift}   

    \node[above of=input, yshift=-1ex] (TMP) {};
    \draw (TMP) -- (input);

    \node[below of=output, yshift=1ex] (TMP) {};
    \draw (output) -- (TMP);

    \node[left of=input, xshift=\nameshift-3.8em, yshift=2em] (topleft) {};
    \node[right of=output, xshift=4em, yshift=-1.6em] (bottomright) {};
    \draw (topleft) rectangle (bottomright);

    \node [left of=flux, rotate=90, anchor=north, yshift=7em]
          {\textbf{Diffusion Block}};
\end{tikzpicture}}
  \vspace{-2mm}
  \caption{Diffusion block for an explicit    
           diffusion step \eqref{eq:explicit} with flux function $\bm\Phi$, 
           time 
          step size $\tau$, and a discrete derivative operator $\bm K$. 
          \label{fig:diffusion_block}}
\end{figure}
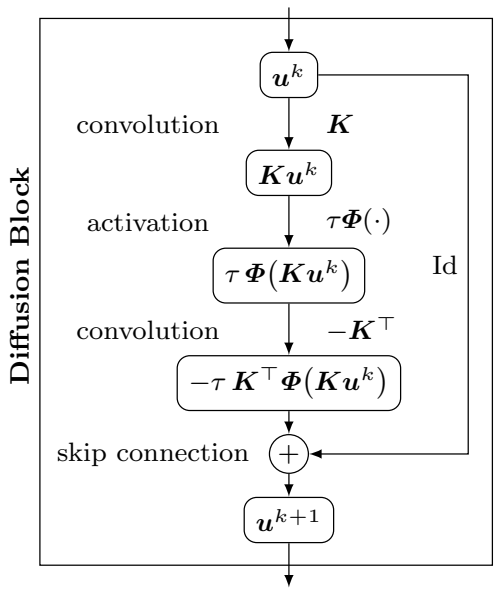


The connection between explicit diffusion schemes and ResNets yields three key 
structural insights:
\begin{enumerate}
\item The rescaled flux function $\tau \,\bm\Phi$ serves as the sole activation 
function $\sigma_1$. This motivates us to investigate popular diffusion flux 
functions in Section~\ref{sec:nonmonotone}.
They have not been considered as CNN activations so far. 
\item The skip connection naturally arises from the discretisation of 
the temporal derivative. This is one numerical justification for skip 
connections in neural networks. We investigate several other motivations of 
skip connections in Section~\ref{sec:skip}.
\item Lastly, the filters exhibit a negated symmetric filter structure $\bm 
W_2=-\bm W_1^\top$. This is a natural consequence of the gradient flow 
structure of the diffusion process, and leads to provable stability guarantees 
for ResNets with such a filter structure, as we show in the following.
\end{enumerate}

\subsection{Stability for Symmetric ResNets}
The structural connection between explicit schemes and ResNets allows us to
transfer classical results for stability~\cite{DWB09} and 
well-posedness~\cite{We97} of diffusion evolutions to a specific 
residual network architecture. 

To this end, we consider ResNets which chain diffusion blocks. Since we show 
that a key to stability of these networks is the symmetric filter structure, 
we refer to these architectures as \emph{symmetric residual networks
(SymResNets)}, following~\cite{RH20,ZS20}. 

For these networks, we prove Euclidean stability and well-posedness. Euclidean 
stability guarantees that the Euclidean norm of the signal is nonincreasing in 
each iteration, i.e. $\nor{\bm u^{k+1}} \leq \nor{\bm u^k}$. Well-posedness 
ensures that the network output is a continuous function of the input data.

\begin{theorem}[Euclidean Stability of Symmetric \\Residual Networks]
\label{theo:stab}
  Consider a symmetric residual network chaining any number of diffusion blocks 
  \eqref{eq:explicit} with convolutions represented by a convolution matrix 
  $\bm K$ and activation function $\tau \bm \Phi$. Moreover, assume that
  the activation function can be expressed as a diffusion flux function 
  $\Phi(s) = g(s^2) \, s$ and has a finite Lipschitz constant $L$. Then the 
  symmetric residual network is well-posed and stable in the Euclidean norm if 
  \begin{equation}\label{eq:bound}
   \tau \leq \frac{2}{L \nor{\bm K}^2}.
  \end{equation} 
  Here, $||\cdot||_2$ denotes the spectral norm which is induced by the 
  Euclidean norm. 
\end{theorem}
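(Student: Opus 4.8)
The plan is to analyze a single diffusion block first and then chain the estimate over all blocks. Writing one explicit step as $\bm u^{k+1} = \bm u^k - \tau \bm K^\top \bm \Phi(\bm K \bm u^k)$, I would estimate $\nor{\bm u^{k+1}}^2$ by expanding the square:
\begin{equation}
\nor{\bm u^{k+1}}^2 = \nor{\bm u^k}^2 - 2\tau \langle \bm u^k,\, \bm K^\top \bm \Phi(\bm K \bm u^k)\rangle + \tau^2 \nor{\bm K^\top \bm \Phi(\bm K \bm u^k)}^2.
\end{equation}
The first inner product rewrites as $\langle \bm K \bm u^k,\, \bm \Phi(\bm K \bm u^k)\rangle$, and here the key observation is that $\bm \Phi$ acts componentwise with $\Phi(s) = g(s^2)\,s$, so each component contributes $g(s_i^2)\,s_i^2 \geq 0$ whenever $g \geq 0$. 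Thus this middle term is nonnegative and works in our favor. For the quadratic term I would bound $\nor{\bm K^\top \bm \Phi(\bm K \bm u^k)} \leq \nor{\bm K}\,\nor{\bm \Phi(\bm K \bm u^k)}$ and then use the Lipschitz property together with $\bm \Phi(\bm 0) = \bm 0$ to get $\nor{\bm \Phi(\bm K \bm u^k)} \leq L\,\nor{\bm K \bm u^k} \leq L\,\nor{\bm K}\,\nor{\bm u^k}$, so the quadratic term is at most $\tau^2 L^2 \nor{\bm K}^4 \nor{\bm u^k}^2$.

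Combining these, and comparing the middle and last terms, I would want $2\tau \langle \bm K \bm u^k, \bm \Phi(\bm K \bm u^k)\rangle \geq \tau^2 \nor{\bm K^\top \bm\Phi(\bm K \bm u^k)}^2$. Since the left side equals $2\tau \sum_i g((\bm K \bm u^k)_i^2)\,(\bm K \bm u^k)_i^2$ and the right side is bounded by $\tau^2 L^2 \nor{\bm K}^2 \sum_i g((\bm K \bm u^k)_i^2)^2\,(\bm K \bm u^k)_i^2$ (factoring one copy of $g\,s^2$ out using $\Phi(s)^2 = g(s^2)^2 s^2$), a sufficient termwise condition is $2 \geq \tau L \nor{\bm K}^2 \cdot L_{\mathrm{local}}$ where $L_{\mathrm{local}}$ absorbs $g(s^2)$; more cleanly, since the Lipschitz constant $L$ already dominates $g(s^2)$ on the relevant range (as $\Phi' = g + 2s^2 g'$ and $\Phi$ Lipschitz bounds $|g(s^2)s|/|s|$ appropriately), one obtains $\nor{\bm u^{k+1}} \leq \nor{\bm u^k}$ precisely under $\tau \leq 2/(L\nor{\bm K}^2)$. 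Iterating over the chain of blocks then gives $\nor{\bm u^K} \leq \nor{\bm u^0} = \nor{\bm f}$, establishing Euclidean stability.

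For well-posedness, I would argue that each diffusion block is a Lipschitz-continuous map of its input: the composition $\bm f \mapsto \bm f - \tau \bm K^\top \bm \Phi(\bm K \bm f)$ is Lipschitz with constant at most $1 + \tau L \nor{\bm K}^2 \leq 3$ under the step size restriction, and a finite composition of Lipschitz maps is Lipschitz, hence continuous. This yields continuous dependence of the output on the input data.

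**Main obstacle.** The delicate point is handling the diffusivity factor $g(s^2)$ correctly when comparing the "good" middle term against the "bad" quadratic term: the middle term carries one power of $g$ while the squared term carries $g^2$, so the bound has to exploit that the flux's Lipschitz constant $L$ controls $g(s^2)$ pointwise on the relevant argument range — $|\Phi(s) - \Phi(0)| \le L|s|$ gives $g(s^2)\,|s| \le L\,|s|$, i.e. $g(s^2) \le L$. Getting this reduction right, rather than carrying an extra $\sup g$ factor, is what makes the clean bound \eqref{eq:bound} come out; I expect the rest to be routine Cauchy–Schwarz and telescoping.
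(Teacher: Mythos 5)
Your argument is correct, but it follows a genuinely different route from the paper's. The paper uses $\bm\Phi(\bm K\bm u^k)=\bm G(\bm u^k)\,\bm K\bm u^k$ with a diagonal diffusivity matrix $\bm G$, rewrites one block as $\bm u^{k+1}=\left(\bm I-\tau\bm K^\top\bm G(\bm u^k)\bm K\right)\bm u^k$, and argues spectrally: $\bm G$ is positive semidefinite with $\sup g\le L$, so the symmetric frozen-coefficient operator has eigenvalues in $\left[1-\tau L\nor{\bm K}^2,\,1\right]\subset[-1,1]$ under \eqref{eq:bound}, hence is nonexpansive, and well-posedness follows from continuity of this operator. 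You instead expand $\nor{\bm u^{k+1}}^2$ and, writing $s_i\coloneqq(\bm K\bm u^k)_i$, compare the cross term $2\tau\sum_i g(s_i^2)s_i^2$ against the quadratic term $\tau^2\nor{\bm K^\top\bm\Phi(\bm K\bm u^k)}^2\le\tau^2\nor{\bm K}^2\sum_i g(s_i^2)^2 s_i^2$ termwise, using $g(s^2)\le L$, which you obtain cleanly from $|\Phi(s)|\le L|s|$ (the same fact the paper extracts from $\sup|\Phi'|\ge\sup g$). Both proofs rest on the same three ingredients --- nonnegativity of $g$, the bound $g\le L$, and the symmetric structure $\bm K^\top(\cdot)\,\bm K$ --- and give the identical condition; your energy-type estimate avoids forming and diagonalising the frozen-coefficient matrix, while the paper's spectral formulation makes the origin of the factor $2$ (the eigenvalue $-1$) explicit and carries over directly to the block-matrix multi-channel discussion. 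Two minor remarks: your intermediate bound ``$\tau^2L^2\nor{\bm K}^2\sum_i g(s_i^2)^2s_i^2$'' carries a spurious factor $L^2$ (for $L<1$ it is not an upper bound of the quadratic term), but the clean termwise requirement $2\ge\tau\nor{\bm K}^2g(s_i^2)$ that you state immediately afterwards is exactly what is needed and is correct; and your global bound $\tau^2L^2\nor{\bm K}^4\nor{\bm u^k}^2$ alone could not be played against the cross term, which you correctly recognise by switching to the termwise comparison. Your Lipschitz-composition argument for well-posedness is fine and slightly more quantitative than the paper's appeal to continuity.
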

\begin{proof}
  The activation function $\sigma(s)$ can be expressed in terms of a diffusivity
  function by 
  \begin{equation}
  \sigma(s)=\tau\Phi(s) = \tau g(s^2) \, s.
  \end{equation}
  Thus, its application is equivalent to a rescaling with a diagonal matrix 
  $\bm G(\bm u^k)$ with $g((\bm K \bm u^k)^2_i)$ as $i$-th diagonal element. 
  Therefore, we can write \eqref{eq:explicit} as
  \begin{equation}
    \bm u^{k+1} = \left(\bm I - \tau \bm K^\top \bm G(\bm 
      u^k) \bm K \right) \bm u^k.
  \end{equation}
  At this point, well-posedness follows directly from the continuity of the 
  operator $\bm I - \tau \bm K^\top \bm G(\bm u^k) \bm K$, as  
  the diffusivity $g$ is assumed to be smooth~\cite{We97}.
    
  We now show that the time step size restriction \eqref{eq:bound} guarantees 
  that the eigenvalues of the operator always lie in the interval $\left[-1, 
  1\right]$. Then the explicit step \eqref{eq:explicit} constitutes a 
  contraction mapping which in turn guarantees Euclidean stability.
  
  As the spectral norm is sub-multiplicative, we can estimate the eigenvalues 
  of $\bm K^\top \bm G(\bm u^k) \bm K$ for each matrix separately.
  Since $g$ is nonnegative, the diagonal matrix~$\bm G$ is positive 
  semidefinite. The maximal eigenvalue of~$\bm G$ is then given by the 
  supremum of $g$, which can be bounded by the Lipschitz constant $L$ of $\Phi$:
  \begin{multline}
    L = \underset{s}{\text{sup}} \left|\Phi'(s)\right|
    = \underset{s}{\text{sup}} \left|g(s^2) + 2 s^2 g^\prime(s^2)\right|
    \\
    \geq \underset{s}{\text{sup}} \left|g(s^2)\right|.
  \end{multline}
  Consequently, the eigenvalues of $\bm K^\top \bm G(\bm u^k) \bm K$ lie in the 
  interval $\left[0, \tau L \nor{\bm K}^2\right]$. 
  
  Then the operator $\bm I - \tau \bm K^\top \bm G(\bm u^k) \bm K$ has 
  eigenvalues in $\left[1 - \tau L \nor{\bm K}^2, 1\right]$, and the condition 
  \begin{equation}
  1 - \tau L \nor{\bm K}^2 \geq -1 
  \end{equation}
  leads to the bound \eqref{eq:bound}.
  \qed
\end{proof}

Similar results have been obtained recently in~\cite{RDF20,RH20,ZS20}, albeit 
with alternative justifications. In Section~\ref{sec:nonmonotone} we show that 
our unique diffusion interpretation additionally suggests novel design concepts 
for CNNs such as nonmonotone activation functions.

\subsection{How General is Our Stability Result?}
While our focus on explicit diffusion schemes appears restrictive at 
first glance, our stability result is more general.

The fact that we use discrete differential operators as convolutions is no 
restriction, since any convolution matrix can be expressed as a weighted 
combination of discrete differential operators. Moreover, our proof does not 
even require a convolutional matrix structure. 

\begin{center}
\vspace{2mm}
\setlength{\fboxsep}{2mm}
\setlength{\fboxrule}{0.4mm}
\fbox{
\begin{minipage}{0.85\linewidth}\centering
{\bf A key requirement for stability is \\ the
symmetric structure $\bm W_2 = -\bm W_1^T$.} 
\end{minipage}
}
\vspace{2mm}
\end{center}

The symmetric convolution structure is an important structural difference to 
the original ResNet formulation~\cite{HZRS16}. It does not only yield a stable 
network, but also allows to reduce the amount of trainable 
parameters by 50\%, since inner and outer convolution share their weights.

Moreover, the requirement of using a flux function as an activation function 
can be relaxed. As we have shown, one only requires the diagonal matrix $\bm G$ 
to be positive semidefinite. While this is naturally fulfilled for a 
diffusion flux function, other activations also adhere to this constraint.
For example, the ReLU function multiplies positive arguments with $1$ and 
negative ones with $0$, yielding a binary positive semidefinite matrix $\bm 
G$. Thus, using the ReLU instead of a diffusion flux does not affect stability. 
This shows that diffusion algorithms inspire general, sufficient design 
criteria for stable networks.

In particular, we do not require any assumptions on the monotonicity of the 
activation function, in contrast to the results of Ruthotto and Haber 
\cite{RH20}. This motivates us to investigate typical diffusivities and their 
flux functions in Section~\ref{sec:nonmonotone}.

\subsection{Enforcing Stability in Practice}
While the stability criterion \eqref{eq:bound} can be computed on the fly 
already during the training process of the network, evaluating the spectral 
radius of the operator $\bm K$ is costly. To this end, we suggest a simple 
rescaling to turn the stability bound \eqref{eq:bound} into an a priori 
criterion. 

For a symmetric residual network with a single channel, one 
can directly use Gershgorin's circle theorem~\cite{Ge30} to bound the maximum 
absolute eigenvalue of $\bm K$. More precisely, the eigenvalues of $\bm K$ lie 
in the union of circles around the diagonal entries $k_{ii}$ with radii 
$r_i = \sum_{j\ne i} |k_{ij}|$ corresponding to the absolute sums of the 
off-diagonal values. Thus, the maximal absolute eigenvalue of $\bm K$ is 
bounded by the largest absolute row sum of $\bm K$. If we simply rescale both 
inner and outer convolutions by this sum, we can guarantee $\nor{\bm K}^2 
\leq 1$. Then the stability condition \eqref{eq:bound} transforms into $\tau 
\leq \frac{2}{L}$. Since the Lipschitz constant $L$ of the activation is known 
a priori, this simple rescaling allows to constrain the time step size to a 
fixed value, while not affecting the expressive power of the network. 

However, most networks in practice are not concerned with only a single 
channel. To this end, we extend our stability result to symmetric ResNets with 
multiple channels.

For a diffusion block operating on a signal with $C$ channels, the matrix $\bm 
K$ is a $C \times C$ block convolution matrix. As long as the transposed 
structure is realised, this is not problematic for the stability proof. 

An extension of Gershgorin's circle theorem to block matrices~\cite{Tr08} 
states that the eigenvalues of $\bm K$ lie in the union of circles which are 
centred around the eigenvalues of the diagonal blocks. The radii of the 
circles are given by the sum of the spectral norms of the off-diagonal blocks. 
If we rescale each block matrix as in the single channel case, we simply 
need to additionally divide the operator $\bm K$ by  
$\sqrt{C}$ to ensure that $\nor{\bm K}^2 \leq 1$. With this, we obtain the 
same a priori criterion as in the single channel case.

This strategy constitutes an instance of the popular weight normalisation 
technique~\cite{SK16}, and related spectral normalisations have shown to be 
successful for improving the performance and convergence speed of the training 
process~\cite{BRRS21,CP20a,GFPC21,ZBS20}.

\subsection{Nonmonotone Activation Functions}\label{sec:nonmonotone}

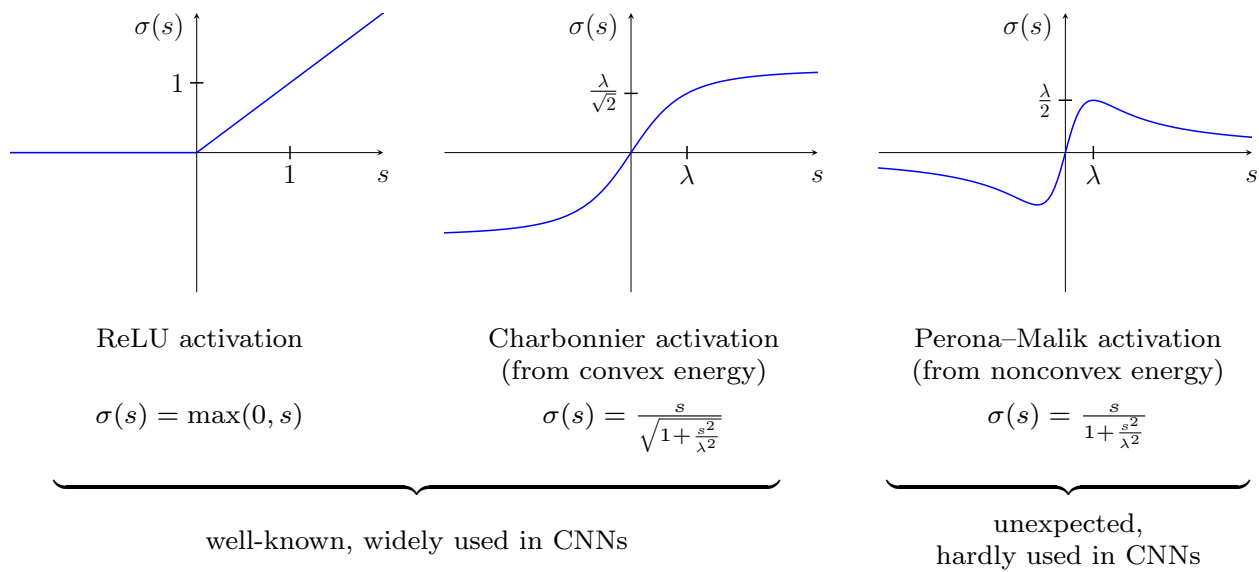
\begin{figure*}
  \centering
  \begin{center}
\setlength{\tabcolsep}{3mm}
\resizebox{\linewidth}{!}{
\begin{tabular}{ccc}     
    \begin{tikzpicture}[scale=0.6]
      \def\la{0.6}
      \begin{axis}
      [
        font=\Large,
        samples=300,
        domain=-10:10, 
        axis lines=middle,
        xmin = -2,
        xmax = 2,
        xtick = {1},
        ytick = {1},
        inner sep=0pt,
        tick style = {thick, black},
        major tick length = 7,
        x label style={at={(axis description cs:1.0,0.44)},anchor=north},
        y label style={at={(axis description cs:0.4,1.0)},anchor=north},
        x tick label style={yshift=-1mm},
        y tick label style={xshift=-1mm},
        width=\axisdefaultwidth,
        height=0.8*\axisdefaultwidth,
        xlabel = {$s$},
        ylabel = {$\sigma(s)$},
        ymin = -2,  
        ymax = 2, 
        xtick = {1},
        ytick = {1},
        xticklabels = {$1$},
        yticklabels = {$1$}
      ]
      
      \addplot[blue, thick]
        plot [domain=-10:0] (\x, 0);
      \addplot[blue, thick]
        plot [domain=0:10] (\x, \x);
      
      \end{axis}
    \end{tikzpicture}
  &
      \begin{tikzpicture}[scale=0.6]
        \def\la{0.6}
        \pgfmathsetmacro{\ymarki}{\la / sqrt(2)}
        \begin{axis}
        [
          font=\Large,
          samples=300,
          domain=-10:10, 
          axis lines=middle,
          xmin = -2,
          xmax = 2,
          xtick = {1},
          ytick = {1},
          inner sep=0pt,
          tick style = {thick, black},
          major tick length = 7,
          x label style={at={(axis description cs:1.0,0.44)},anchor=north},
          y label style={at={(axis description cs:0.4,1.0)},anchor=north},
          x tick label style={yshift=-1mm},
          y tick label style={xshift=-1mm},
          width=\axisdefaultwidth,
          height=0.8*\axisdefaultwidth,
          xlabel = {$s$},
          ylabel = {$\sigma(s)$},
          ymin = -1,  
          ymax = 1, 
          xtick = {\la},
          ytick = {\ymarki},
          xticklabels = {$\lambda$},
          yticklabels = {$\frac{\lambda}{\sqrt{2}}$}
        ]
        
        \addplot[blue, thick]
          plot (\x, {\x / sqrt(1 + \x^2 / \la^2)});
        
        \end{axis}
      \end{tikzpicture}
  &
    \begin{tikzpicture}[scale=0.6]
      \def\la{0.6}
          
      \pgfmathsetmacro{\ymarki}{\la / 2}
      \begin{axis}
       [
        font=\Large,
        samples=300,
        domain=-10:10, 
        axis lines=middle,
        xmax = 4, 
        xmin = -4, 
        ymax = 0.8, 
        ymin = -0.8, 
        xtick = {1},
        ytick = {1},
        inner sep=0pt,
        tick style = {thick, black},
        major tick length = 7,
        x label style={at={(axis description cs:1.0,0.44)},anchor=north},
        y label style={at={(axis description cs:0.4,1.0)},anchor=north},
        x tick label style={yshift=-1mm},
        y tick label style={xshift=-1mm},
        width=\axisdefaultwidth,
        height=0.8*\axisdefaultwidth,
        xtick = {\la},
        ytick = {\ymarki},
        xticklabels = {$\lambda$},
        yticklabels = {$\frac{\lambda}{2}$},
        xlabel = {$s$},
        ylabel = {$\sigma(s)$},
      ]
      
      \addplot[blue, thick]
          plot (\x, {\x / (1 + \x^2 / \la^2)});
      
      \end{axis}
    \end{tikzpicture}
  
  \\[2ex]
    ReLU activation
  & Charbonnier activation
  & Perona--Malik activation
  \\

  & \makecell{(from convex energy)} 
  & \makecell{(from nonconvex energy)}
  \\[1ex]
  $\sigma(s) = \text{max}(0,s)$
  &$\sigma(s) = \frac{s}{\sqrt{1+\frac{s^2}{\lambda^2}}}$
  &$\sigma(s) = \frac{s}{1+\frac{s^2}{\lambda^2}}$
  \\[2ex]
  \multicolumn{2}{c}{$\underbrace{\phantom{\hspace{8cm}}}$}
  & 
  \multicolumn{1}{c}{$\underbrace{\phantom{\hspace{4cm}}}$}
  \\[2ex]
  \multicolumn{2}{c}{well-known, widely used in CNNs}
  &
  \multicolumn{1}{c}{\makecell{unexpected, \\ hardly used in CNNs}}
\end{tabular}
}
\end{center}
  \caption{Visualisation of three activation functions. The ReLU (left) is a 
  standard activation function. The Charbonnier and Perona--Malik activations 
  represent two representatives of diffusion-inspired activation functions. 
  Both are dependent on a contrast parameter $\lambda$. The Charbonnier 
  activation is monotone, as it arises from a convex energy. On the contrary, 
  the Perona--Malik activation is nonmonotone, and its associated energy is 
  nonconvex. In this visualisation, we set $\tau=1$ for the sake of simplicity. 
  \label{fig:activations}}
\end{figure*}

Our connection between diffusivity $g(s^2)$ and activation function 
$\sigma(s) = \tau \, \Phi(s)$ with the diffusion flux $\Phi(s)$ refreshes an 
old idea of neural network design~\cite{DMNP93,MR94}. 

In Figure~\ref{fig:activations}, we present three activation functions: The 
ReLU activation~\cite{NH10}, along with two 
activations resulting from popular diffusivities. These activations are 
\emph{odd} functions, which is natural in the diffusion case, where the 
argument of the flux function consists of signal derivatives. It reflects the 
invariance axiom that signal negation and filtering are commutative.

The Charbonnier diffusivity~\cite{CBAB94}, which stems from a convex energy and 
can be seen as a rescaled regularised total variation 
diffusivity~\cite{ABCM98,ROF92}, yields a monotone activation function. 
Similarly shaped activation functions such as the hyperbolic tangent have been 
used in early neural networks, before being superseded by ReLU activations. 

The rational Perona--Malik diffusivity \eqref{eq:peronamalik}~\cite{PM90}, 
however, results in a nonmonotone activation function. The associated energy 
functional is nonconvex. Nevertheless, discretisations of diffusion processes 
using nonmonotone flux functions can be shown to be well-posed, 
despite acting contrast-enhancing~\cite{WB97}. For more activation functions 
inspired by diffusivities, we refer to~\cite{AWP20}. 

The concept of nonmonotone activation functions is unusual in the CNN world. 
Although there have been a few early proposals in the neural network literature 
arguing in favour of nonmonotone activations~\cite{DMNP93,MR94}, they are 
rarely used in modern CNNs. In practice, they often fix the activation to 
simple, monotone functions such as the rectified linear unit (ReLU).
From a PDE perspective, this appears restrictive. The diffusion interpretation 
suggests that activation functions should be learned in the same manner as 
convolution weights and biases.

In practice, this hardly happens apart from a 
few notable exceptions such as~\cite{CP16,GWMC13,OMLM18}. Recently, activation 
functions which are slightly nonmonotone variants of the ReLU proved successful 
for image classification tasks ~\cite{Mi20,RZL17,ZMWZ21}. As nonmonotone flux 
functions outperform monotone ones e.g. for diffusion-based denoising 
\cite{PM90}, it appears promising to incorporate them into CNNs. 

Our translation of explicit schemes is an example of a simple, direct 
correspondence which in turn allows for multiple novel insights. In the 
following, we explore variants of explicit schemes as well as implicit schemes 
which inspire changes to the skip connections of the symmetric ResNets, leading 
to more efficient architectures.

\section{The Value of Skip Connections}\label{sec:skip}
So far, we have seen that the temporal discretisation of an explicit scheme 
naturally leads to skip connections. This, however, is just one of the many 
justifications for their use. Since their proposal, skip 
connections~\cite{HZRS16} have been adapted into numerically inspired networks 
in many different forms; see e.g.~\cite{HLMW17,LHL20,LZLD18,OPF19,ZCF19}. 

This motivates us to explore several other numerical algorithms which justify  
several types of skip connections from a numerical perspective. We explore 
unconditionally stable schemes, acceleration strategies for explicit 
schemes, and fixed point iterations for implicit schemes.

\subsection{Du~Fort--Frankel Schemes}
While the classical explicit scheme \eqref{eq:explicit} is only conditionally 
stable, there exist absolutely stable schemes which are still explicit. These 
schemes are not that popular in practice since they trade unconditional 
stability for conditional consistency. However, we will see that this is not 
problematic from the perspective of learning.

Du Fort and Frankel~\cite{DF53} propose to change the temporal discretisation 
of the explicit scheme \eqref{eq:explicit} to a central difference and 
introduce a stabilisation term on the right hand side, corresponding to an 
approximation of $\partial_{tt} u$. A Du Fort--Frankel scheme for the 
generalised diffusion \eqref{eq:nldiff} evolution can be written as 
\begin{multline}
  \frac{\bm u^{k+1} - \bm u^{k-1}}{2 \tau} = - \bm K^\top 
  \bm\Phi\!\left(\bm K \bm u^k\right) 
  \\- \alpha \left(\bm u^{k+1} - 2 \bm u^k + \bm u^{k-1}\right),
\end{multline}
where a positive constant $\alpha$ controls the influence of the stabilisation 
term. 

Solving this scheme for $\bm u^{k+1}$ yields
\begin{multline}\label{eq:dufort}
  \bm u^{k+1} 
  = \frac{4\tau\alpha}{1 +2\tau \alpha}\left( \bm u^k- \frac{1}{2\alpha}\bm 
  K^\top \bm\Phi\!\left(\bm K \bm u^k\right) \right)
 \\ + \frac{1 - 2\tau \alpha}{1 + 2\tau \alpha} \bm u^{k-1}.
\end{multline}
For $\tau \alpha = \frac{1}{2}$, one obtains the explicit scheme 
\eqref{eq:explicit}.

The scheme involves the signals $\bm u^k$ and $\bm u^{k-1}$ at the current and 
the previous time level. The first term is nothing else than a rescaled 
diffusion block, where $\frac{1}{2 \alpha}$ takes the role of the original time 
step size. Since the scalar factors $\frac{4\tau\alpha}{1 +2\tau \alpha}$ and 
$\frac{1 - 2\tau \alpha}{1 + 2\tau \alpha}$ add up to $1$, this is simply an 
extrapolation of the result of an explicit step based on the signal at time 
level $k-1$.

If $\alpha$ is large enough, this scheme is unconditionally stable. Thus, one 
does not need to obey any stability condition, in contrast to the explicit 
case. 
Whereas classical proofs such as~\cite{GG76} consider only the linear case and 
typically work in the Fourier space, we are not aware of any proofs for the 
stability of nonlinear Du Fort--Frankel schemes. To this end, we prove 
stability of the nonlinear case in Appendix \ref{app:dufort}. 

However, this scheme is not unconditionally consistent. If the time step size 
$\tau$ is too large, the scheme \eqref{eq:dufort} approximates a different PDE 
\cite{DF53}, namely a nonlinear variant of the telegrapher's equation. Such 
PDEs have also been used in image processing; see e.g.~\cite{RZ09}.

In the trainable setting, the conditional consistency is not an issue, but can 
even present a chance. It allows the network to learn a more suitable PDE for 
the problem at hand. In our experiments we show that indeed, the unconditional 
stability of the Du Fort--Frankel scheme can help to achieve better results 
when only few residual blocks are available.

This scheme can be realised with a small change in the original diffusion block 
from Figure~\ref{fig:diffusion_block} by adding an additional skip connection. 
The two skip connections are weighted by $\frac{4\tau\alpha}{1 +2\tau \alpha}$ 
and $\frac{1 - 2\tau \alpha}{1 + 2\tau \alpha}$, respectively.

\subsection{Fast Semi-iterative Schemes}
Another numerical scheme which also leads to the same concept from a different 
motivation is based on acceleration strategies for explicit schemes.
Hafner et al.~\cite{HOWR16} introduced fast semi-iterative (FSI) schemes to 
accelerate explicit schemes for diffusion processes.

In a similar manner as the Du Fort--Frankel schemes, FSI extrapolates the 
diffusion result at a fractional time step $k+\frac{\ell}{L}$ with the previous 
fractional time step $k+\frac{\ell-1}{L}$ and a weight $\alpha_\ell$.
For the explicit diffusion scheme \eqref{eq:explicit}, an FSI acceleration with 
cycle length $L$ reads
\begin{multline}\label{eq:fsi}
  \bm u^{k+\frac{\ell+1}{L}} = \alpha_\ell 
  \left(\bm u^{k+\frac{\ell}{L}} - \tau \bm K^\top \bm \Phi\!\left(\bm K 
  \bm u^{k+\frac{\ell}{L}}\right)\right)
  \\ + \left(1-\alpha_\ell\right)\bm u^{k+\frac{\ell-1}{L}}
\end{multline}
with fractional time steps $\ell=0,\dots,L\!-\!1$ and extrapolation weights 
$\alpha_\ell \coloneqq (4\ell+2)/(2\ell+3)$. 
One formally initialises with $\bm{u}^{k-\frac{1}{L}} \coloneqq \bm u^{k}$. 
 
The crucial difference to Du Fort--Frankel schemes is that FSI schemes use 
time-varying extrapolation coefficients instead of fixed ones. These 
coefficients are motivated by a box filter factorisation and allow a cycle to 
realise a super time step of size $\frac{L(L+1)}{3}\tau$. Thus, with one cycle 
involving $L$ steps, one reaches a super step size of $\mathcal{O}(L^2)$ rather 
than $\mathcal{O}(L)$. This explains its remarkable efficiency~\cite{HOWR16}.

Even though Du Fort--Frankel and FSI schemes have fundamentally different 
motivations, they lead to the same architectural changes, where additional 
weighted skip connections realise acceleration strategies. This is in line with 
observations in the CNN literature; see e.g.~\cite{LZLD18,OPF19}. We visualise 
this concept at the example of an FSI architecture in 
Figure~\ref{fig:fsi_block}.


\begin{figure}[t]
  \centering
  \resizebox{0.7\linewidth}{!}{\begin{tikzpicture}[-latex]
  \matrix (m)
  [
    matrix of nodes,
    column 1/.style = {nodes={block}}
  ]
  {
    |(input)|          
    $\bm u^{k + \frac{\ell - 1}{L}}$ 
    \\[2ex]            
    |[summary] (diff1)|
    diffusion block%
    \\[4ex]
    |[operator] (add1)|
    $+$%
    \\[2ex]
    |(middle)|
    $\bm u^{k + \frac{\ell}{L}}$
    \\[2ex]
    |[summary] (diff2)|
    diffusion block%
    \\[4ex]
    |[operator] (add2)|
    $+$%
    \\[2ex]
    |(output)|
    $\bm u^{k + \frac{\ell + 1}{L}}$
    \\
  };

    \def\nameshift{-1em}                    
    \def\skipconnshift{2.0}                     
  
    \edge{input}{diff1}
    \edge{diff1}{add1}
    \edge{add1}{middle}
    \edge{middle}{diff2}
    \edge{diff2}{add2}
    \edge{add2}{output}
    
    \emptyskipedge{input}{add2}{\skipconnshift}
    \node[right of=add2, yshift=1.5ex] {$\cdot (1-\alpha_{\ell})$};
    \node[above of=add2, xshift=-2.5ex, yshift=-3.5ex] {$\cdot \alpha_{\ell}$};
    
    \node[right of=output, xshift=8.5ex, yshift=-6.2ex] (TMP) {} ;
    \draw[thin, dashed, black!65] (middle) -- +(2.1,0) -- (TMP);
    \node[thin, dashed, right of=add1, yshift=1.5ex, xshift=1ex] 
    {$\cdot (1-\alpha_{\ell-1})$};
    \node[thin, dashed, above of=add1, xshift=-3ex, yshift=-3.5ex] 
    {$\cdot \alpha_{\ell-1}$};
        
    \node[right of=input, xshift=8.5ex, yshift=6.2ex] (TMP) {} ;
    \draw[thin, dashed, black!65] (TMP) |- (add1);

    \node[above of=input, yshift=-1ex] (TMP) {};
    \draw (TMP) -- (input);

    \node[below of=output, yshift=1ex] (TMP) {};
    \draw (output) -- (TMP);

    \node[left of=input, xshift=-3em, yshift=2em] (topleft) {};
    \node[right of=output, xshift=4em, yshift=-1.6em] (bottomright) {};
    \draw (topleft) rectangle (bottomright);

    \node [left of=middle, rotate=90, anchor=north, yshift=4.5em]
          {\textbf{FSI Block}};
\end{tikzpicture}}
  \vspace{-2mm}
  \caption{FSI block realising the acceleration of an explicit    
           diffusion step \eqref{eq:fsi} with time-varying extrapolation 
           parameters $\alpha_\ell$. A similar architecture with differently 
           weighted skip connections arises for a Du Fort--Frankel scheme 
           \eqref{eq:dufort}. \label{fig:fsi_block}}
\end{figure}
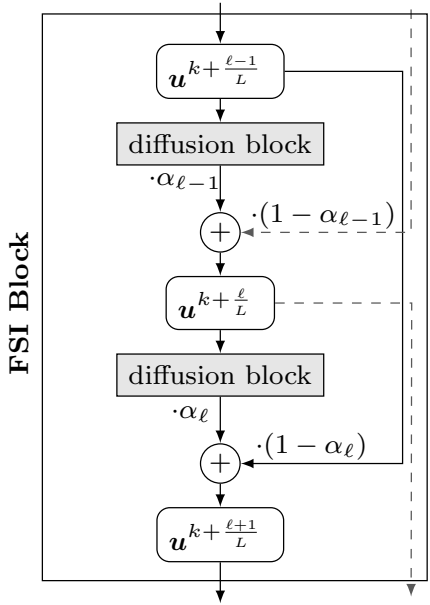


FSI and Du Fort--Frankel schemes are just two representatives of a large class 
of extrapolation strategies; see e.g.~\cite{Ne83,Po64,HS80}. The ongoing 
success of using momentum methods for training~\cite{RM86,SMDH13} and 
constructing~\cite{LZLD18,OPF19,ZCF19} neural networks warrants an extensive 
investigation of these strategies in both worlds. 

\subsection{Implicit Schemes}
So far, we have investigated variants of explicit schemes and their neural 
counterparts. However, implicit discretisations constitute another important 
solver class. We now show that such a discretisation of the generalised 
diffusion equation can be connected to a recurrent neural network 
(RNN). RNNs are classical neural network architectures; see e.g.~\cite{Ho82}. 
At the same time, this translation inspires yet another way of leveraging skip 
connections. 

A fully implicit discretisation of the diffusion equation \eqref{eq:nldiff} is 
given by
\begin{equation}
  \bm u^{k+1} = \bm 
  u^k - \tau \bm K^\top \bm \Phi\!\left(\bm K \bm u^{k+1}\right).
\end{equation}
The crucial difference as opposed to the explicit scheme lies in using the new 
signal $\bm u^{k+1}$ within the flux function $\bm \Phi$. This yields a 
nonlinear system of equations, which we solve by means of a fixed point 
iteration with a cycle of length $L$:
\begin{equation}
  \bm u^{k+\frac{\ell+1}{L}} = \bm u^k - \tau \bm K^\top \bm \Phi\! 
  \left(\bm K \bm u^{k+\frac{\ell}{L}}\right),
\end{equation}
where $\ell=0,\dots,L\!-\!1$, and where we assume that $\tau$ is sufficiently
small to yield a contraction mapping.

For $L=1$, we obtain the explicit scheme \eqref{eq:explicit} with its 
ResNet interpretation. For larger $L$, however, different skip connections  
arise. They connect the layer at time step $k$ with all subsequent 
layers at steps $k+\frac{\ell}{L}$ with $\ell=0,\dots,L\!-\!1$. 

There are two possible ways of interpreting this type of connection. One option 
is to regard this as a consequent extension of the extrapolation idea of FSI 
and Du Fort--Frankel schemes. Instead of only connecting a node to its two 
successors, the fixed point iteration above connects a node to $L$ of its 
successors. Similar ideas have been used in the popular DenseNet architecture 
\cite{HLMW17}, where each layer is connected to all subsequent ones. 

Another option is to interpret the repeated connection to $\bm u^k$ as a 
feedback loop, which in turn is closely connected to a recurrent neural network 
(RNN) architecture~\cite{Ho82}.

In their trainable nonlinear diffusion model, Chen and 
Pock~\cite{CP16} proposed a similar architecture. However, they explicitly 
supplement the diffusion process with an additional reaction term which results 
from the data term of the energy. Our feedback term is a pure numerical 
phenomenon of the fixed point solver.

We see that skip connections can implement a number of successful numerical 
concepts: forward difference approximations of the temporal derivative in 
explicit schemes, extrapolation steps to accelerate them e.g.~via FSI 
or Du Fort--Frankel schemes, and recurrent connections within fixed point 
solvers for implicit schemes.

\section{Review: Multigrid Solvers and U-nets}\label{sec:multigridreview}
Although the previously investigated numerical strategies and their neural 
counterparts can be efficient, they work on a single scale: The signal is 
considered at its original resolution at all points in time. However, using 
different signal resolutions in a clever combination can yield even higher 
efficiency.

This is the core idea of the large class of multigrid approaches 
\cite{Br77,BHM00,Hac85}. They belong to the most 
efficient numerical methods for PDE-related problems and have been 
successfully applied to various tasks such as image denoising~\cite{BC10a}, 
inpainting~\cite{MHWT12}, video compression~\cite{KSFR07}, and image 
sequence analysis~\cite{BWKS06}.

On the CNN side, architectures that work on multiple resolutions of the signal 
have become very successful. In particular, the shape of the popular U-net 
architecture~\cite{RFB15} suggests that there is a structural connection 
between multigrid and CNN concepts. 

By translating multigrid solvers into a U-Net architecture, we show that this 
is indeed the case, which serves as a basis for explaining the remarkable 
success of U-nets. Since both underlying concepts are not self-explanatory, we 
review them in the following before connecting them in the next section. 

\subsection{Multigrid Solvers for Nonlinear Systems}
Multigrid methods~\cite{Br77,BHM00,Hac85} are designed to accelerate 
the convergence speed of standard numerical solvers such as the Jacobi or the 
Gauß-Seidel method~\cite{Sa03}. These solvers attenuate high-frequent 
components of the residual error very quickly, while low-frequent error 
components are damped slowly. This causes a considerable drop in convergence 
speed after a few iterations.

Multigrid methods remedy this effect by transferring the low-frequent 
error to a coarser grid, transforming them into high-frequent components. This 
allows a coarse grid solver to attenuate them more efficiently. By correcting 
the fine grid approximation with coarse grid results, convergence speed 
can be significantly improved.

In the following, we review the so-called full approximation scheme (FAS) 
\cite{Br77} for a nonlinear system of equations. We consider a two-grid cycle 
as the basic building block of more complex multigrid solvers. 

We are interested in solving a nonlinear system of equations of the form 
\begin{equation}
 \bm A(\bm x) = \bm b,
\end{equation}
with a nonlinear operator $\bm A$ and a right hand side 
vector $\bm b$ for an unknown coefficient vector $\bm x$.  

The two-grid FAS involves two grids with different step-sizes: a fine 
grid of size $h$, and a coarse grid of size~$H~>~h$. We denote the respective 
grid by superscripts. The following six steps describe the two-grid FAS:
\begin{enumerate}
\item \textbf{Presmoothing Relaxation:} A standard solver is applied to the 
fine grid system $\bm A^h\!\left(\bm x^h\right) = \bm b^h$. Given an 
initialisation $\bm x_0^h$, it produces an approximation $\bm{\tilde x}^h$ to 
the solution with a reduced high frequency error.  

\item \textbf{Restriction:}
In order to approximate low-frequent components of the error more efficiently, 
one transfers the problem to the coarse grid with the help of a restriction 
operator $\bm R^{h\rightarrow H}$. One restricts both the 
residual $\bm r^h = \bm A^h\!\left(\bm x^h\right) - \bm b^h$ as well as the 
current approximation $\bm{\tilde x}^h$ to the coarse grid. One obtains two 
parts of the right hand side for the coarse grid problem: One part $\bm 
b^H = \bm R^{h\rightarrow H} \bm r^h$ which is used directly, and a second one 
which we denote by $\bm y^H =\bm R^{h\rightarrow H} \bm{\tilde x}^h$ serving as 
the argument for the nonlinear operator $\bm A^H$. The coarse grid problem 
then reads
\begin{equation}
  \bm A^H\!\left(\bm x^H\right) = \bm A^H\!\left(\bm y^H\right) + \bm b^H.
\end{equation}

If we express the desired solution $\bm x^H $ in terms of the error $\bm e^H$ 
by $\bm x^H=\bm y^H + \bm e^H$, then we see that this equation is solved for 
the full approximation rather than the error alone, in contrast to a linear 
multigrid scheme. Hence,this scheme is called the full approximation scheme. If 
the operator $\bm A$ is linear, FAS reduces to a linear multigrid scheme. 

A standard choice for $\bm R^{h\rightarrow H}$ is a simple averaging. However, 
finding suitable restriction operators is a difficult task, which motivates 
researchers to even learn such operators; see e.g.~\cite{GGKY19,KDO17}.

\item \textbf{Coarse Grid Computation:} Solving the coarse grid problem with a 
standard solver produces an error approximation $\bm{\tilde x}^H$. 

\item \textbf{Prolongation:} The approximation on the coarse grid needs to be 
transferred to the fine grid again.To this end, one applies a prolongation 
operator $\bm P^{H\rightarrow h}$. 

Since a coarse grid solution $\bm{\tilde x}^H$ is a full approximation, we need 
to compute the approximation to the error by $\bm{\tilde x}^H - \bm{y}^H$. This 
error approximation is then transferred to the fine grid via $\bm 
P^{H\rightarrow h}$.  

A standard choice for $\bm P^{H\rightarrow h}$ is a nearest neighbour 
interpolation, but as for the restriction operator, finding a good prolongation 
operator is not easy. 

\item \textbf{Correction:} The fine grid approximation $\bm{\tilde x}^h$ is 
corrected with the upsampled coarse grid error approximation 
$\bm P^{H\rightarrow h}\!\left(\bm{\tilde x}^H - \bm{y}^H\right)$ 
to produce a new approximation
\begin{equation}
\bm{\tilde x}^h_\text{new} = \bm{\tilde x}^h + \bm P^{H\rightarrow 
h}\!\left(\bm{\tilde x}^H - \bm{y}^H\right).
\end{equation}

\item \textbf{Postsmoothing Relaxation:} Finally, one applies another solver on 
the fine grid to smooth high frequent errors which have been introduced by the 
correction step. 
\end{enumerate}

The two-grid FAS will serve 
as the starting point for translating multigrid concepts into the a U-net 
formulation, in an extension to the linear connections from our conference 
publication~\cite{APWS21}.

\subsection{Review: U-nets}
U-nets~\cite{RFB15} are another popular neural architecture. They 
process information on multiple scales by repeatedly down- and upsampling the 
input data, interleaved with a series of convolutional network layers. This 
multiscale analysis makes them well-suited for medical image analysis tasks 
such as 
segmentation~\cite{DYLM17,RFB15}, but also for pose estimation~\cite{NYD16}
and shape generation~\cite{ESO18}.

A two-level U-net with fine grid size $h$ and coarse grid 
size $H$ has the following structure:
\begin{enumerate}
\item An input signal $\bm f^h$ is fed into a series of general convolutional 
layers which we denote by $\bm C^h_1\!\left(\cdot\right)$. The resulting output 
signal is denoted by $\bm{\tilde f^h} = \bm C^h_1\!\left(\bm f^h\right)$.  

Originally, these layers are assumed to be feed-for\-ward convolutional layers, 
but they can also be replaced by any other suitable layer type such as 
residual layers.  
 
\item The fine grid signal $\bm{\tilde f^h}$ is transferred to a coarser grid 
with a restriction operator $\bm R^{h\rightarrow H}$, yielding a coarse grid 
signal $\bm f^H = \bm R^{h\rightarrow H} \bm f$. 
  
\item On the coarse grid, another series of convolutional layers $\bm 
C^H\!\left(\cdot\right)$ is applied to the signal, yielding a modified coarse 
grid signal $\bm{\tilde f}^H = \bm C^H\!\left(\bm f^H\right)$.
 
\item The modified coarse grid signal is upsampled with a prolongation 
operator $\bm P^{H\rightarrow h}$.
 
\item With the help of a skip connection, the modified fine grid signal 
$\bm{\tilde f}^h$ and the upsampled coarse grid signal $\bm P^{H\rightarrow 
h}\bm{\tilde f}^H$ are added together. This produces a new fine grid signal 
$\bm{\tilde f}^h_\text{new}$. 
 
While the original U-net formulation~\cite{RFB15} suggests to concatenate both 
signals, other works such as~\cite{NYD16} simply add the signals. 
For our following discussion of connections between U-nets and multigrid 
schemes, we focus on the latter variant.

\item Lastly, another series of convolutional layers $\bm 
C_2^h\!\left(\cdot\right)$ is applied to the new fine grid signal, producing 
the final output signal $\hat{\bm f}^h$. 
\end{enumerate}

We visualise this architecture in Figure \ref{fig:multigrid_and_unet}(a).

\begin{figure*}[t]
\begin{subfigure}{\textwidth}
\centering
\begin{tikzpicture}[-latex]
  \matrix (m)
  [
    matrix of nodes,
    column sep=3mm,
    column 1/.style = {nodes={block}},
    column 2/.style = {nodes={block}},
    column 3/.style = {nodes={block}},
    column 4/.style = {nodes={block}},
    column 5/.style = {nodes={block}},
    column 6/.style = {nodes={block}},
    column 7/.style = {nodes={block}},
    column 8/.style = {nodes={block}}
  ]
  {
        |(input)|          
        $\bm f^h$%
    &   
        |[summary] (conv1)|
        $\bm C_1^h(\cdot)$%
    &   
        |(conv1out)|
        $\bm{\tilde f}^h$%
    &   
        %
        %
    &   
        |[operator] (add)|
        $+$%
    &   
        |(conv2in)|
        $\bm{\tilde f}_{\text{new}}^h$%
    &   
        |[summary] (conv2)|
        $\bm C_2^h(\cdot)$%
    &   
        |(output)|
        $\bm{\hat{f}}^h$%
    \\[5ex]
        %
        %
    &   
        %
        %
    &   
        |(down)|
        $\bm f^H$%
    &   
        |[summary] (downconv)|
        $\bm C^H(\cdot)$%
    &   
        |(downconvout)|
        $\bm{\tilde f}^H$%
    &   
        %
        %
    &   
        %
        %
    &   
    \\
  };
  
  \edge{input}{conv1}
  \edge{conv1}{conv1out}
  \edge{conv1out}{add}
  \edge{add}{conv2in}
  \edge{conv2in}{conv2}
  \edge{conv2}{output}
  
  \edge{conv1out}{down}
  \edgelabelleft{$\bm R^{h\rightarrow H}$}
  \edge{down}{downconv}
  \edge{downconv}{downconvout}
  \edge{downconvout}{add}
  \edgelabel{$\bm P^{H \rightarrow h}$}

\end{tikzpicture}
\caption{U-net architecture for an input $\bm f^h$.}
\vspace{5mm}
\end{subfigure}
\begin{subfigure}{\textwidth}
\centering
\begin{tikzpicture}[-latex]
  \matrix (m)
  [
    matrix of nodes,
    column sep=3mm,
    column 1/.style = {nodes={block}},
    column 2/.style = {nodes={block}},
    column 3/.style = {nodes={block}},
    column 4/.style = {nodes={block}},
    column 5/.style = {nodes={block}},
    column 6/.style = {nodes={block}},
    column 7/.style = {nodes={block}},
    column 8/.style = {nodes={block}},
  ]
  {
        |(input)|          
        $\bm x_0^h, \bm y^h, \bm b^h, \cdot$%
    &   
        |[summary] (solv1)|
        $\bm S^h_{\bm A}(\cdot)$%
    &   
        |(solv1out)|
        $\bm{\tilde x}^h, \bm y^h, \bm b^h, \bm r^h$%
    &   
        %
        %
    &   
        |[operator] (add)|
        $+$%
    &   
        |(solv2in)|
        $\tilde{\bm{x}}_{\text{new}}^h, \bm y^h, \bm b^h, \cdot$%
    &   
        |[summary] (solv2)|
        $\bm S^h_{\bm A}(\cdot)$%
    &   
        |(output)|
        $\bm{\hat x}^h, \bm y^h, \bm b^h, \bm{\hat{r}}^h$%
    \\[15ex]
        %
        %
    &   
        %
        %
    &   
        |(down)|
        $\bm x_0^H, \bm y^H, \bm b^H, \cdot$%
    &   
        |[summary] (downsolv)|
        $\bm S^H_{\bm A}(\cdot)$%
    &   
        |(downsolvout)|
        $\bm{\tilde x}^H, \bm y^H, \bm b^H, \bm r^H$%
    &   
        %
        %
    &   
        %
        %
    &   
    \\
  };
  
  \edge{input}{solv1}
  \edge{solv1}{solv1out}
  \edge{solv1out}{add}
  \edge{add}{solv2in}
  \edge{solv2in}{solv2}
  \edge{solv2}{output}
  
  \edge{solv1out}{down}
  \edgelabelleft{$\begin{Bmatrix}
  \bm 0 & \bm 0 & \bm 0 & \bm 0 \\
  \bm R^{h \rightarrow H} & \bm 0 & \bm 0 & \bm 0  \\
  \bm 0 & \bm 0 & \bm 0 & \bm R^{h \rightarrow H} \\
  \cdot & \cdot & \cdot & \cdot
  \end{Bmatrix}$}
  \edge{down}{downsolv}
  \edge{downsolv}{downsolvout}
  \edge{downsolvout}{add}
  \edgelabel{$\begin{Bmatrix}
  \bm P^{H \rightarrow h} & -\bm P^{H \rightarrow h} &\bm 0 & \bm 0\\
  \bm 0 & \bm 0 & \bm 0 & \bm 0\\
  \bm 0 & \bm 0 & \bm 0 & \bm 0\\
  \cdot & \cdot & \cdot & \cdot
  \end{Bmatrix}$}

\end{tikzpicture}
\caption{FAS two-grid cycle in the form of a U-net utilising four network 
channels. Besides the iteration variable $\bm x$, the network tracks variables 
$\bm b$ and $\bm y$ as linear and nonlinear parts of the system's right hand 
side, as well as the residual $\bm r$.}
\end{subfigure}
\caption{Architectures for a general U-net (a) and an FAS two-grid cycle (b).
\label{fig:multigrid_and_unet}}
\end{figure*}
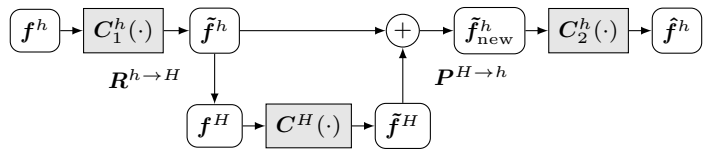
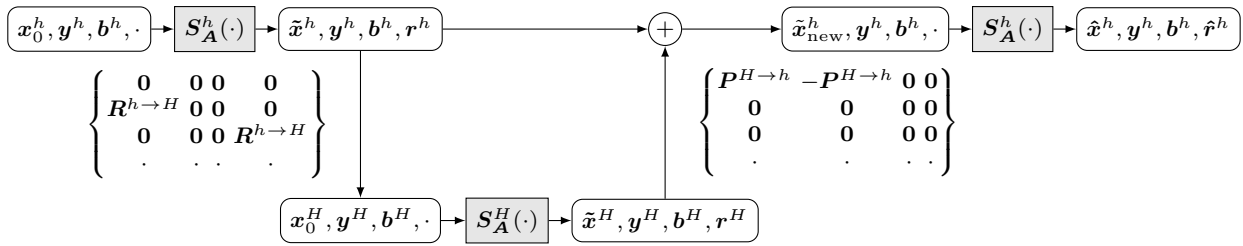

\section{From Multigrid to U-nets}\label{sec:multigrid}
Now we show how one can express FAS in terms of a U-net architecture. 
For our U-net, we use multiple network channels which carry the 
variables required by FAS. Even though not all variables are used at each 
point in the network, we keep the channel number consistent for the sake of 
simplicity. 

We track the FAS variables in dedicated channels only for didactic reasons, as 
a direct translation shows that a U-net architecture is 
sufficient for representing FAS. When practically implementing FAS, 
this overhead can be spared. 

Firstly, let us assume that we are given suitable solvers $\bm S_{\bm 
A}^h(\cdot), \bm S_{\bm A}^H(\cdot)$ for the nonlinear operators on the fine 
and coarse grid, respectively. To be able to use the two-grid cycle as a 
recursive building block, we assume that all solvers approximate solutions for 
nonlinear systems of the form $\bm A(\bm x) = \bm A(\bm y) + \bm b$, regardless 
of the grid.

To this end, we always keep track of the iteration variable $\bm x$, the 
nonlinear right hand side $\bm y$, and the linear right hand side $\bm b$. In 
addition, we track the residual~$\bm r$. By appropriately modifying these 
variables, we can ensure that the solvers always act on the desired system, 
despite having a common specification. 

\begin{enumerate}
\item \textbf{Presmoothing Relaxation:} The first instance of the fine grid 
solver obtains an initial iteration variable $\bm x_0^h$, which can be $\bm 0$ 
or a more sophisticated guess. Since the first solver is supposed to solve $\bm 
A^h\!\left(\bm x^h\right) = \bm b^h$, we simply set $\bm y^h=\bm 0$. In 
addition, we provide the linear right hand side $\bm b^h$. A residual is not 
needed as an input.

The solver produces a preliminary approximation $\bm{\tilde x}^h$, and passes  
the right hand side components $\bm y^h$ and $\bm b^h$ through without changes. 
It also computes the residual $\bm r^h = \bm b^h - \bm A^h\!\left(\bm{\tilde 
x}^h\right)$ as an additional output.

\item \textbf{Restriction:} As the downsampling is now explicitly concerned 
with four channels, the corresponding operator in our U-net is a $4\times4$ 
block matrix. We apply the multigrid restriction operator only to certain 
channels. 

The coarse grid initialisation $\bm x_0^H$ can be set to $\bm 0$, taking no 
information from the fine grid. The coarse, nonlinear right hand side $\bm y^H 
=\bm R^{h\rightarrow H} \bm{\tilde x}^h$ is given by the downsampled fine grid 
approximation. The corresponding linear right hand side $\bm b^H = \bm 
R^{h\rightarrow H} \bm r^h$ is the downsampled residual. 

In contrast to our linear correspondences in~\cite{APWS21}, the restriction 
step in FAS fits a U-net interpretation even better, since the 
approximation $\bm{\tilde x}^h$ itself is restricted, as is the case in the 
U-net.

\item \textbf{Coarse Grid Computation:} The coarse solver follows the same 
specification as the fine grid one. However, since $\bm y^H$ is not set to $\bm 
0$ at this point, the coarse solver actually solves the desired system $\bm 
A^H\!\left(\bm x^H\right) = \bm A^H\!\left(\bm y^H\right) + \bm b^H$. It 
produces a coarse approximation $\bm{\tilde x}^H$ and a residual $\bm r^H$,
while leaving the right hand side components unchanged. 

\item \textbf{Prolongation:} 
The upsampling step allows to prepare the coarse grid variables in such a way 
that the skip connection automatically performs the correct additions. 

The first row of the matrix operator ensures that we upsample the correction 
$\bm P^{H\rightarrow h}\bm{\tilde x}^H - \bm P^{H\rightarrow h}\bm{ y}^H$. 
Note that this is equivalent to the FAS formulation $\bm P^{H\rightarrow 
h}\!\left(\bm{\tilde x}^H - \bm{ y}^H\right)$ if the prolongation operator is 
linear. This is no limitation, however, since for the nonlinear case we can 
require the solvers to directly output the difference $\bm{\tilde x}^H - 
\bm{y}^H$.

The right hand side components $\bm y^H$ and $\bm b^H$ are not used in the 
upsampling, as the fine grid right hand side is supposed to be passed on. The 
same holds for the residual, as it is not relevant to the second fine grid 
solver. It is only needed in case one adds another coarser level to the cycle.

\item \textbf{Correction:} In the correction step, the fine approximation 
$\bm{\tilde x}^h$ is appropriately corrected, and the fine grid right hand side 
components $\bm y^h$ and $\bm b^h$ are forwarded. 

\item \textbf{Postsmoothing Relaxation:} Another instance of the fine grid 
solver solves the problem $\bm A^h\!\left(\bm x^h\right)= \bm b^h$. The 
nonlinear part $\bm y^h$ of the right hand side is still set to $\bm 0$, 
ensuring that the correct system is solved. 
\end{enumerate}

The resulting architecture is visualised in Figure 
\ref{fig:multigrid_and_unet}(b). This shows that U-nets share essential 
structural properties with multigrid methods. In particular, employing multiple 
image resolutions connected through pooling and upsampling operations, as well 
as horizontal skip connections which realise correction steps are the keys for 
the success of both methods. This leads us to believe that at their core, 
U-nets realise a sophisticated multigrid strategy. 

\subsection{V-Cycles, W-Cycles and Full Multigrid}
Our connections between 
two-grid FAS and U-nets are the basic building block for more advanced 
multigrid strategies.

So-called V-cycles arise from recursively stacking the two-grid FAS. Moreover, 
W-cycles can be built by concatenating several V-cycles. Optimising the depth 
and length of these cycles can lead to vast efficiency gains over direct 
solution strategies. 

On the CNN side, the corresponding concept of U-nets with more levels as well 
as concatenations thereof is successful in practice: Typical U-nets work on 
multiple resolutions~\cite{RFB15}, and so-called stacked hourglass models 
\cite{NYD16} arise by concatenating multiple V-cycle architectures.

A full multigrid (FMG) strategy solves a problem on multiple grids by 
successively concatenating V- and W-cycles, usually starting at the coarsest 
grid and progressing towards the finest one. In our experiments in 
Section~\ref{sec:mgexp} we will construct a trainable FMG model based on the 
two-grid FAS network to approximate the solution of an inpainting problem. This 
shows that our model reduction of the full U-net is successful in practice and 
inspires new design strategies for U-nets.  

\section{Experimental Evaluations}\label{sec:exp}
Let us now show that our findings are also of practical relevance.
Our experiments are divided into two parts. First, we evaluate the proposed 
symmetric ResNet architectures, along with their variations and nonmonotone 
activation functions for a denoising problem. 

In a second experiment, we make use of our connections between multigrid and 
U-nets to learn an efficient solver for diffusion-based sparse inpainting, 
based on a trainable FAS architecture.

\subsection{Symmetric ResNets and Nonmonotone Activations}
Since we motivate our network designs through numerical algorithms for a 
diffusion problem, we start with an elementary comparison on a denoising 
problem. We deliberately choose a denoising problem, since it is a prime 
example of a well-posed problem, for which the presented numerical schemes can 
be easily applied. 

In a second step, we refine the simple network structures to more 
and more complex ones, approaching the standard neural network design. 
This shows the extent to which our networks can compete with 
off-the-shelf ResNets.  

\subsubsection{Experimental Setup}
We compare symmetric ResNets and their Du Fort--Frankel and FSI extensions 
with the original ResNet architecture~\cite{HZRS16}. As activation 
functions we allow the ReLU~\cite{NH10}, Charbonnier 
\cite{CBAB94}, and Perona--Malik~\cite{PM90} activation functions. 

The original ResNets train two filter kernels per ResNet block, along with two 
bias terms. The symmetric ResNets on the other hand only train one filter 
kernel per block, without any bias terms. We only consider kernels of width 
three. For maximal transparency, we do not use any additional optimisation 
layers such as batch normalisation.

When using Charbonnier and Perona--Malik activations, we always train the 
corresponding contrast parameter $\lambda$. The Du Fort--Frankel networks also 
learn the extrapolation parameter $\alpha$, and the FSI networks train 
individual extrapolation parameters of each block. 

In addition, all models train their numerical parameters such as time step 
size and extrapolation parameters. We restrict the time step size $\tau$ to our 
stability condition \eqref{eq:bound} to obtain a stable symmetric ResNet model. 
In the case of the Du Fort--Frankel extension we restrict the extrapolation 
parameter $\alpha$ to the bound in Appendix \ref{app:dufort}, thus also 
yielding a stable scheme. For FSI, we restrict the extrapolation parameters 
$\alpha_{\ell}$ to the range $[0,2]$. This preserves the extrapolation 
character of the scheme. However, no stability theory is available in the case 
of learned extrapolation parameters.

We evaluate the networks on a synthetic dataset of 1D
signals which are piecewise affine, with jumps between the 
segments. This design highlights the ability of the different approaches to 
preserve signal discontinuities. The signals are of length $256$ and are 
composed of linear segments that span between $\frac{1}{10}$ and $\frac12$ of 
the signal length. Their values lie within the interval $[0,255]$.

Finally, we add Gaussian noise of standard deviation $\sigma=10$ to the 
signals, without clipping out of bounds values. This yields pairs of corrupted 
and ground truth signals. The training dataset contains $10000$ such pairs, and 
the test and validation datasets contain $1000$ pairs each. As a measure of 
denoising quality, we choose the peak-signal-to-noise ratio (PSNR), where 
higher values indicate better denoising performance. 

For a fair comparison, we train all network configurations in the same fashion. 
We use the Adam optimiser~\cite{KB14} with a learning rate of 
$0.001$ for at most $2000$ training epochs, and choose the average mean square 
error (MSE) over the training dataset as an optimisation objective. 

The filter weights are initialised according to a uniform random 
distribution with a range of $[-0.1, 0.1]$. The contrast parameters $\lambda$, 
the time step sizes $\tau$, and the extrapolation weights $\alpha$ are 
initialised with fixed values of $15$, $1.0$, and $1.0$, respectively. Out of 
several random initialisations, we choose the best performing one.

\begin{figure*}[h!]
  \centering 
  \resizebox{0.32\linewidth}{!}{\begin{tikzpicture}[scale = 1.0]

\begin{axis}
[
width=0.9*\axisdefaultwidth,
height=0.75*\axisdefaultwidth,
samples=500,
axis lines=left,
ymin = 30.5, ymax = 38,
xmin = 0, xmax = 22,
xtick = {1, 5, 10, 15,20},
ytick = {31, 33, ..., 37},
ylabel={PSNR},
xlabel={number of residual blocks},
xlabel near ticks,
ylabel near ticks,
grid = major,
legend entries={ResNet,
                SymResNet, 
                Du Fort--Frankel,
                FSI,
                Linear diffusion},
title = {\textbf{ReLU activation}},
legend cell align=left,
legend style={at={(0.3, 0.7)}, anchor = west}
]

\addplot+[thick, dotted, mark options={solid}, black, mark = *,
restrict expr to domain={\thisrow{modelid}}{\STANDARDRESNET:\STANDARDRESNET},
restrict expr to domain={\thisrow{activationid}}{\RELU:\RELU},
restrict expr to domain={\thisrow{couplingid}}{\UNCOUPLED:\UNCOUPLED},
restrict expr to domain={\thisrow{channels}}{1:1},
restrict expr to domain={\thisrow{smoothness}}{100000000:100000000},
unbounded coords=discard]
table[x = steps, y = psnr]
{test_errors};

\addplot+[thick, dashed, mark options={solid}, red, mark = square*,
restrict expr to domain={\thisrow{modelid}}{\STABLERESNET:\STABLERESNET},
restrict expr to domain={\thisrow{activationid}}{\RELU:\RELU},
restrict expr to domain={\thisrow{couplingid}}{\UNCOUPLED:\UNCOUPLED},
restrict expr to domain={\thisrow{channels}}{1:1},
restrict expr to domain={\thisrow{smoothness}}{100000000:100000000},
unbounded coords=discard]
table[x = steps, y = psnr]
{test_errors};

\addplot+[thick, dashdotted, mark options={solid}, blue, mark = diamond*,
restrict expr to domain={\thisrow{modelid}}{\DUFORTFRANKEL:\DUFORTFRANKEL},
restrict expr to domain={\thisrow{activationid}}{\RELU:\RELU},
restrict expr to domain={\thisrow{couplingid}}{\UNCOUPLED:\UNCOUPLED},
restrict expr to domain={\thisrow{channels}}{1:1},
restrict expr to domain={\thisrow{smoothness}}{100000000:100000000},
unbounded coords=discard]
table[x = steps, y = psnr]
{test_errors};

\addplot+[thick, densely dotted, mark options={solid}, black!60!green, mark = 
star,
restrict expr to domain={\thisrow{modelid}}{\FSI:\FSI},
restrict expr to domain={\thisrow{activationid}}{\RELU:\RELU},
restrict expr to domain={\thisrow{couplingid}}{\UNCOUPLED:\UNCOUPLED},
restrict expr to domain={\thisrow{channels}}{1:1},
restrict expr to domain={\thisrow{smoothness}}{100000000:100000000},
unbounded coords=discard]
table[x = steps, y = psnr]
{test_errors};

\addplot+[dashed, black, thick, mark=none, domain=0:100] plot (\x,\bestHDpsnr);
\end{axis}

\end{tikzpicture}}
  \resizebox{0.32\linewidth}{!}{\begin{tikzpicture}[scale = 1.0]

\begin{axis}
[
width=0.9*\axisdefaultwidth,
height=0.75*\axisdefaultwidth,
samples=500,
axis lines=left,
ymin = 30.5, ymax = 38,
xmin = 0, xmax = 22,
xtick = {1, 5, 10, 15,20},
ytick = {31, 33, ..., 37},
ylabel={PSNR},
xlabel={number of residual blocks},
xlabel near ticks,
ylabel near ticks,
grid = major,
legend entries={ResNet,
                SymResNet, 
                Du Fort--Frankel,
                FSI,
                Charbonnier diffusion},
title = {\textbf{Charbonnier activation}},
legend cell align=left,
legend style={at={(0.3, 0.25)}, anchor = west}
]

\addplot+[thick, dotted, mark options={solid}, black, mark = *,
restrict expr to domain={\thisrow{modelid}}{\STANDARDRESNET:\STANDARDRESNET},
restrict expr to domain={\thisrow{activationid}}{\CHARBONNIER:\CHARBONNIER},
restrict expr to domain={\thisrow{couplingid}}{\UNCOUPLED:\UNCOUPLED},
restrict expr to domain={\thisrow{channels}}{1:1},
restrict expr to domain={\thisrow{smoothness}}{100000000:100000000},
unbounded coords=discard]
table[x = steps, y = psnr]
{test_errors};

\addplot+[thick, dashed, mark options={solid}, red, mark = square*,
restrict expr to domain={\thisrow{modelid}}{\STABLERESNET:\STABLERESNET},
restrict expr to domain={\thisrow{activationid}}{\CHARBONNIER:\CHARBONNIER},
restrict expr to domain={\thisrow{couplingid}}{\UNCOUPLED:\UNCOUPLED},
restrict expr to domain={\thisrow{channels}}{1:1},
restrict expr to domain={\thisrow{smoothness}}{100000000:100000000},
unbounded coords=discard]
table[x = steps, y = psnr]
{test_errors};

\addplot+[thick, dashdotted, mark options={solid}, blue, mark = diamond*,
restrict expr to domain={\thisrow{modelid}}{\DUFORTFRANKEL:\DUFORTFRANKEL},
restrict expr to domain={\thisrow{activationid}}{\CHARBONNIER:\CHARBONNIER},
restrict expr to domain={\thisrow{couplingid}}{\UNCOUPLED:\UNCOUPLED},
restrict expr to domain={\thisrow{channels}}{1:1},
restrict expr to domain={\thisrow{smoothness}}{100000000:100000000},
unbounded coords=discard]
table[x = steps, y = psnr]
{test_errors};

\addplot+[thick, densely dotted, mark options={solid}, black!60!green, mark = 
star,
restrict expr to domain={\thisrow{modelid}}{\FSI:\FSI},
restrict expr to domain={\thisrow{activationid}}{\CHARBONNIER:\CHARBONNIER},
restrict expr to domain={\thisrow{couplingid}}{\UNCOUPLED:\UNCOUPLED},
restrict expr to domain={\thisrow{channels}}{1:1},
restrict expr to domain={\thisrow{smoothness}}{100000000:100000000},
unbounded coords=discard]
table[x = steps, y = psnr]
{test_errors};

\addplot+[dashed, black, thick, mark=none, domain=0:100] plot (\x,\bestCHpsnr);
\end{axis}

\end{tikzpicture}}
  \resizebox{0.32\linewidth}{!}{\begin{tikzpicture}[scale = 1.0]

\begin{axis}
[
width=0.9*\axisdefaultwidth,
height=0.75*\axisdefaultwidth,
samples=500,
axis lines=left,
ymin = 30.5, ymax = 38,
xmin = 0, xmax = 22,
xtick = {1, 5, 10, 15,20},
ytick = {31, 33, ..., 37},
ylabel={PSNR},
xlabel={number of residual blocks},
xlabel near ticks,
ylabel near ticks,
grid = major,
legend entries={ResNet,
                SymResNet, 
                Du Fort--Frankel,
                FSI,
                Perona--Malik diffusion},
title = {\textbf{Perona--Malik activation}},
legend cell align=left,
legend style={at={(0.3, 0.25)}, anchor = west}
]

\addplot+[thick, dotted, mark options={solid}, black, mark = *,
restrict expr to domain={\thisrow{modelid}}{\STANDARDRESNET:\STANDARDRESNET},
restrict expr to domain={\thisrow{activationid}}{\PERONAMALIK:\PERONAMALIK},
restrict expr to domain={\thisrow{couplingid}}{\UNCOUPLED:\UNCOUPLED},
restrict expr to domain={\thisrow{channels}}{1:1},
restrict expr to domain={\thisrow{smoothness}}{100000000:100000000},
unbounded coords=discard]
table[x = steps, y = psnr]
{test_errors};

\addplot+[thick, dashed, mark options={solid}, red, mark = square*,
restrict expr to domain={\thisrow{modelid}}{\STABLERESNET:\STABLERESNET},
restrict expr to domain={\thisrow{activationid}}{\PERONAMALIK:\PERONAMALIK},
restrict expr to domain={\thisrow{couplingid}}{\UNCOUPLED:\UNCOUPLED},
restrict expr to domain={\thisrow{channels}}{1:1},
restrict expr to domain={\thisrow{smoothness}}{100000000:100000000},
unbounded coords=discard]
table[x = steps, y = psnr]
{test_errors};

\addplot+[thick, dashdotted, mark options={solid}, blue, mark = diamond*,
restrict expr to domain={\thisrow{modelid}}{\DUFORTFRANKEL:\DUFORTFRANKEL},
restrict expr to domain={\thisrow{activationid}}{\PERONAMALIK:\PERONAMALIK},
restrict expr to domain={\thisrow{couplingid}}{\UNCOUPLED:\UNCOUPLED},
restrict expr to domain={\thisrow{channels}}{1:1},
restrict expr to domain={\thisrow{smoothness}}{100000000:100000000},
unbounded coords=discard]
table[x = steps, y = psnr]
{test_errors};

\addplot+[thick, densely dotted, mark options={solid}, black!60!green, mark = 
star,
restrict expr to domain={\thisrow{modelid}}{\FSI:\FSI},
restrict expr to domain={\thisrow{activationid}}{\PERONAMALIK:\PERONAMALIK},
restrict expr to domain={\thisrow{couplingid}}{\UNCOUPLED:\UNCOUPLED},
restrict expr to domain={\thisrow{channels}}{1:1},
restrict expr to domain={\thisrow{smoothness}}{100000000:100000000},
unbounded coords=discard]
table[x = steps, y = psnr]
{test_errors};

\addplot+[dashed, black, thick, mark=none, domain=0:100] plot (\x,\bestPMpsnr);4
\end{axis}

\end{tikzpicture}}
  \caption{Denoising quality of network architectures with varying depth. We 
  use a single channel, and weights between all blocks are shared. Each plot is 
  concerned with a different activation function. Architectures with 
  Perona--Malik activation perform best, while the ReLU 
    activation is not suitable in this setting. Due to the tight network 
    constraints, the architectures reproduce the performance of classical 
    diffusion filters.\label{fig:atomic}}
\end{figure*}
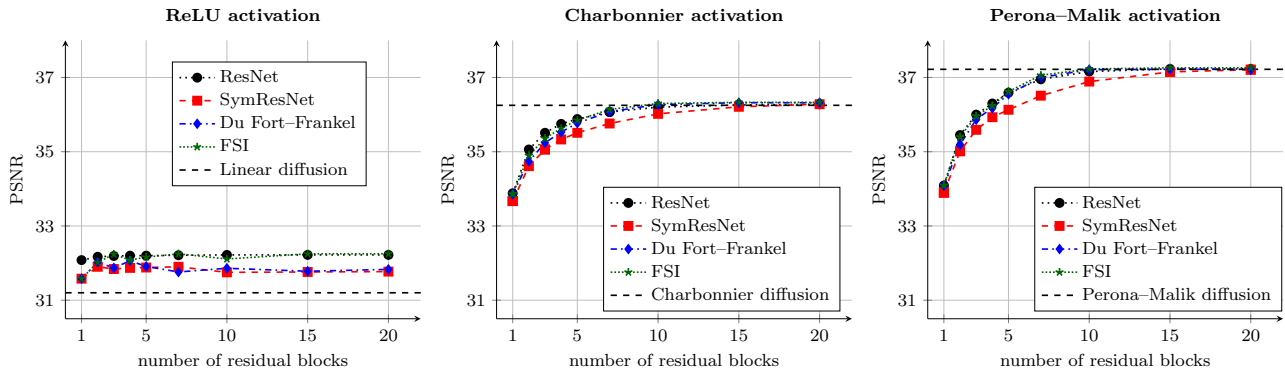
\subsubsection{Evaluation of Model Components}

We first evaluate the potential of the proposed network 
blocks on an individual level. To this end, we train the 
architectures for varying amounts of residual blocks. 
However, all blocks share their weights, and we also use only a single 
network channel. 

This configuration is closest to the interpretation of explicit schemes, and 
it allows us to investigate the approximation qualities of the different 
architectures within a tightly controlled frame. 

Figure~\ref{fig:atomic} presents the denoising quality of the architectures in 
dependence of the number of residual blocks. Each plot is concerned with a 
different activation function.

Firstly, we compare the different network architectures. As the symmetric 
ResNet is guaranteeing stability and uses less than half of the parameters of 
the standard ResNet, it performs slightly worse. This is not surprising, since 
there is a natural tradeoff between performance (high approximation
quality) and stability, as is well-known in the field of numerical analysis. 
Nevertheless, when enough blocks are provided, the symmetric ResNet catches up 
to the standard one.

The acceleration methods of Du Fort--Frankel and FSI outperform the 
symmetric ResNet and yield comparable performance to the standard ResNet. The 
trainable extrapolation parameters help both methods to achieve 
better quality especially when not enough residual blocks are provided to 
reach a sufficient denoising result. This is in full accordance with our 
expectations. When enough steps are provided and no extrapolation is required, 
both methods are on par with the standard and symmetric ResNets.

A side-by-side comparison yields insights into the performance of different 
activation functions. We observe that the performance of ReLU networks is only 
slightly better than classical linear diffusion~\cite{Ii62}. This shows that 
the ReLU is not suited for our denoising problem, regardless of the network 
architecture. After as few as three network blocks, the improvement of deeper 
networks is only marginal. 

In contrast, both the Charbonnier and the Perona--Malik activations are much 
more suitable. The nonmonotone Perona--Malik activation function yields the 
best denoising performance, as our diffusion interpretation suggests. 
When using the original ResNet with a diffusion-inspired 
activation function, tremendous performance gains in comparison to the ReLU 
activation can be achieved. This shows that in this experimental setting,  
the activation is the key to a good performance.

\subsubsection{Optimality of Diffusion Processes}
Interestingly, the standard ResNet with a diffusion activation
naturally learns a symmetric filter structure with biases close to $0$. 

For example, the ResNet with Perona--Malik activation, a grid size of $h=1$, 
and $20$ shared residual blocks learns an inner kernel $\bm k_1~=~(0.922, 
-0.917, 0.006)^\top$, an outer kernel $\bm k_2=(0.051, 0.437, -0.489)^\top$ and 
biases $b_1 = 1.6 \cdot 10^{-1}$ and $b_2 = 1.2 \cdot 10^{-5}$. 
If we factor out the time step size limit of $\tau=0.5$ for this setting from 
the outer kernel $\bm k_2$, we see that it transforms into $\bm{\tilde k}_2 = 
(0.102, 0.874, -0.978)^\top$. It becomes apparent that the kernels 
approximately fulfil the negated symmetric filter structure. 

Moreover, the kernels closely resemble rescaled standard forward and backward 
difference discretisations. This is surprising, as a kernel of width three 
allows to learn derivative operators of second order, but a first order 
operator appears to yield already optimal quality.

This shows that in this constrained setting, second order diffusion processes 
are an optimal model which is naturally learned by a residual network.

\begin{figure*}[h!]
  \centering 
  \resizebox{0.32\linewidth}{!}{\begin{tikzpicture}[scale = 1.0]

\begin{axis}
[
width=0.9*\axisdefaultwidth,
height=0.75*\axisdefaultwidth,
samples=500,
axis lines=left,
ymin = 30.5, ymax = 38,
xmin=3.8, xmax = 250, xmode=log,
ytick = {31, 33, ..., 37},
ylabel={PSNR},
xlabel={number of parameters, logarithmic scale},
xlabel near ticks,
ylabel near ticks,
grid = major,
legend entries={ResNet,
                SymResNet, 
                Du Fort--Frankel,
                FSI,
                Linear diffusion},
title = {\textbf{ReLU activation}},
legend cell align=left,
legend style={at={(0.02, 0.8)}, anchor = west}
]

\addplot+[thick, dotted, mark options={solid}, black, mark = *,
restrict expr to domain={\thisrow{modelid}}{\STANDARDRESNET:\STANDARDRESNET},
restrict expr to domain={\thisrow{activationid}}{\RELU:\RELU},
restrict expr to domain={\thisrow{couplingid}}{\UNCOUPLED:\UNCOUPLED},
restrict expr to domain={\thisrow{channels}}{1:1},
restrict expr to domain={\thisrow{smoothness}}{5.0:5.0},
unbounded coords=discard]
table[x = parameters, y = psnr]
{test_errors};

\addplot+[thick, dashed, mark options={solid}, red, mark = square*,
restrict expr to domain={\thisrow{modelid}}{\STABLERESNET:\STABLERESNET},
restrict expr to domain={\thisrow{activationid}}{\RELU:\RELU},
restrict expr to domain={\thisrow{couplingid}}{\UNCOUPLED:\UNCOUPLED},
restrict expr to domain={\thisrow{channels}}{1:1},
restrict expr to domain={\thisrow{smoothness}}{10.0:10.0},
unbounded coords=discard]
table[x = parameters, y = psnr]
{test_errors};

\addplot+[thick, dashdotted, mark options={solid}, blue, mark = diamond*,
restrict expr to domain={\thisrow{modelid}}{\DUFORTFRANKEL:\DUFORTFRANKEL},
restrict expr to domain={\thisrow{activationid}}{\RELU:\RELU},
restrict expr to domain={\thisrow{couplingid}}{\UNCOUPLED:\UNCOUPLED},
restrict expr to domain={\thisrow{channels}}{1:1},
restrict expr to domain={\thisrow{smoothness}}{10.0:10.0},
unbounded coords=discard]
table[x = parameters, y = psnr]
{test_errors};

\addplot+[thick, densely dotted, mark options={solid}, black!60!green, mark = 
star,
restrict expr to domain={\thisrow{modelid}}{\FSI:\FSI},
restrict expr to domain={\thisrow{activationid}}{\RELU:\RELU},
restrict expr to domain={\thisrow{couplingid}}{\UNCOUPLED:\UNCOUPLED},
restrict expr to domain={\thisrow{channels}}{1:1},
restrict expr to domain={\thisrow{smoothness}}{10.0:10.0},
unbounded coords=discard]
table[x = parameters, y = psnr]
{test_errors};

\addplot+[dashed, black, thick, mark=none, domain=0.1:50000] plot 
(\x,\bestHDpsnr);
\end{axis}

\end{tikzpicture}}
  \resizebox{0.32\linewidth}{!}{\begin{tikzpicture}[scale = 1.0]

\begin{axis}
[
width=0.9*\axisdefaultwidth,
height=0.75*\axisdefaultwidth,
samples=500,
axis lines=left,
ymin = 30.5, ymax = 38,
xmin=3.8, xmax = 250, xmode=log,
ytick = {31, 33, ..., 37},
ylabel={PSNR},
xlabel={number of parameters, logarithmic scale},
xlabel near ticks,
ylabel near ticks,
grid = major,
legend entries={ResNet,
                SymResNet, 
                Du Fort--Frankel,
                FSI,
                Charbonnier diffusion},
title = {\textbf{Charbonnier activation}},
legend cell align=left,
legend style={at={(0.3, 0.25)}, anchor = west}
]

\addplot+[thick, dotted, mark options={solid}, black, mark = *,
restrict expr to domain={\thisrow{modelid}}{\STANDARDRESNET:\STANDARDRESNET},
restrict expr to domain={\thisrow{activationid}}{\CHARBONNIER:\CHARBONNIER},
restrict expr to domain={\thisrow{couplingid}}{\UNCOUPLED:\UNCOUPLED},
restrict expr to domain={\thisrow{channels}}{1:1},
restrict expr to domain={\thisrow{smoothness}}{5.0:5.0},
unbounded coords=discard]
table[x = parameters, y = psnr]
{test_errors};

\addplot+[thick, dashed, mark options={solid}, red, mark = square*,
restrict expr to domain={\thisrow{modelid}}{\STABLERESNET:\STABLERESNET},
restrict expr to domain={\thisrow{activationid}}{\CHARBONNIER:\CHARBONNIER},
restrict expr to domain={\thisrow{couplingid}}{\UNCOUPLED:\UNCOUPLED},
restrict expr to domain={\thisrow{channels}}{1:1},
restrict expr to domain={\thisrow{smoothness}}{10.0:10.0},
unbounded coords=discard]
table[x = parameters, y = psnr]
{test_errors};

\addplot+[thick, dashdotted, mark options={solid}, blue, mark = diamond*,
restrict expr to domain={\thisrow{modelid}}{\DUFORTFRANKEL:\DUFORTFRANKEL},
restrict expr to domain={\thisrow{activationid}}{\CHARBONNIER:\CHARBONNIER},
restrict expr to domain={\thisrow{couplingid}}{\UNCOUPLED:\UNCOUPLED},
restrict expr to domain={\thisrow{channels}}{1:1},
restrict expr to domain={\thisrow{smoothness}}{10.0:10.0},
unbounded coords=discard]
table[x = parameters, y = psnr]
{test_errors};

\addplot+[thick, densely dotted, mark options={solid}, black!60!green, mark = 
star,
restrict expr to domain={\thisrow{modelid}}{\FSI:\FSI},
restrict expr to domain={\thisrow{activationid}}{\CHARBONNIER:\CHARBONNIER},
restrict expr to domain={\thisrow{couplingid}}{\UNCOUPLED:\UNCOUPLED},
restrict expr to domain={\thisrow{channels}}{1:1},
restrict expr to domain={\thisrow{smoothness}}{10.0:10.0},
unbounded coords=discard]
table[x = parameters, y = psnr]
{test_errors};

\addplot+[dashed, black, thick, mark=none, domain=0.1:50000] plot 
(\x,\bestCHpsnr);
\end{axis}

\end{tikzpicture}}
  \resizebox{0.32\linewidth}{!}{\begin{tikzpicture}[scale = 1.0]

\begin{axis}
[
width=0.9*\axisdefaultwidth,
height=0.75*\axisdefaultwidth,
samples=500,
axis lines=left,
ymin = 30.5, ymax = 38,
xmin=3.8, xmax = 250, xmode=log,
ytick = {31, 33, ..., 37},
ylabel={PSNR},
xlabel={number of parameters, logarithmic scale},
xlabel near ticks,
ylabel near ticks,
grid = major,
legend entries={ResNet,
                SymResNet, 
                Du Fort--Frankel,
                FSI,
                Perona--Malik diffusion},
title = {\textbf{Perona--Malik activation}},
legend cell align=left,
legend style={at={(0.3, 0.25)}, anchor = west}
]

\addplot+[thick, dotted, mark options={solid}, black, mark = *,
restrict expr to domain={\thisrow{modelid}}{\STANDARDRESNET:\STANDARDRESNET},
restrict expr to domain={\thisrow{activationid}}{\PERONAMALIK:\PERONAMALIK},
restrict expr to domain={\thisrow{couplingid}}{\UNCOUPLED:\UNCOUPLED},
restrict expr to domain={\thisrow{channels}}{1:1},
restrict expr to domain={\thisrow{smoothness}}{5.0:5.0},
unbounded coords=discard]
table[x = parameters, y = psnr]
{test_errors};

\addplot+[thick, dashed, mark options={solid}, red, mark = square*,
restrict expr to domain={\thisrow{modelid}}{\STABLERESNET:\STABLERESNET},
restrict expr to domain={\thisrow{activationid}}{\PERONAMALIK:\PERONAMALIK},
restrict expr to domain={\thisrow{couplingid}}{\UNCOUPLED:\UNCOUPLED},
restrict expr to domain={\thisrow{channels}}{1:1},
restrict expr to domain={\thisrow{smoothness}}{10.0:10.0},
unbounded coords=discard]
table[x = parameters, y = psnr]
{test_errors};

\addplot+[thick, dashdotted, mark options={solid}, blue, mark = diamond*,
restrict expr to domain={\thisrow{modelid}}{\DUFORTFRANKEL:\DUFORTFRANKEL},
restrict expr to domain={\thisrow{activationid}}{\PERONAMALIK:\PERONAMALIK},
restrict expr to domain={\thisrow{couplingid}}{\UNCOUPLED:\UNCOUPLED},
restrict expr to domain={\thisrow{channels}}{1:1},
restrict expr to domain={\thisrow{smoothness}}{10.0:10.0},
unbounded coords=discard]
table[x = parameters, y = psnr]
{test_errors};

\addplot+[thick, densely dotted, mark options={solid}, black!60!green, mark = 
star,
restrict expr to domain={\thisrow{modelid}}{\FSI:\FSI},
restrict expr to domain={\thisrow{activationid}}{\PERONAMALIK:\PERONAMALIK},
restrict expr to domain={\thisrow{couplingid}}{\UNCOUPLED:\UNCOUPLED},
restrict expr to domain={\thisrow{channels}}{1:1},
restrict expr to domain={\thisrow{smoothness}}{10.0:10.0},
unbounded coords=discard]
table[x = parameters, y = psnr]
{test_errors};

\addplot+[dashed, black, thick, mark=none, domain=0.1:50000] plot 
(\x,\bestPMpsnr);
\end{axis}

\end{tikzpicture}}
  \caption{Denoising quality of network architectures with varying depth and a 
  single channel. The parameters are smoothly changing between the residual 
  blocks. Each plot is concerned with a different activation function. 
  The proposed architectures can outperform the standard ResNet by saving a 
  large amount of parameters.
  \label{fig:dynamic}}
\end{figure*}
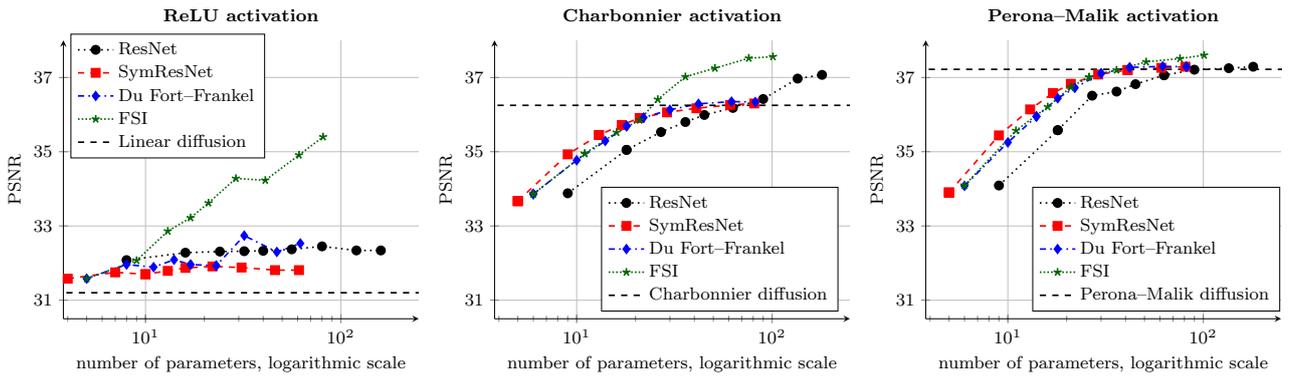

\subsubsection{Time Dynamic Case}
In a practical setting, the residual blocks typically do not share their 
weights, but train them independently. If the parameters evolve smoothly over 
the blocks, we can interpret this as an approximation of a time dynamic PDE 
model.

To investigate the performance of the proposed architectures in this setting, 
we train the parameters of each block individually, but enforce a certain 
smoothness between them. If the parameter vector of a block at time level $k$ 
is given by $\bm \theta^k$, we add a regulariser
\begin{equation}
  \beta \sum_{k=1}^{K} \tau \left(\frac{\bm \theta^{k} - \bm 
  \theta^{k-1}}{\tau}\right)^2
\end{equation}
to the loss function. Here, a smoothness parameter $\beta$ controls the amount 
of smoothness between the blocks, with higher values of $\beta$ leading to 
smoother evolutions. This expression approximates the continuous temporal 
regulariser
\begin{equation}
  \beta \int_{0}^{T} \left(\partial_t \bm \theta(t)\right)^2 \, dt,
\end{equation}
which enforces smoothness of the continuous parameter evolution~$\bm 
\theta(t)$. The regularisation ensures that the learned filters change 
smoothly throughout the layers. This is essential for the numerical scheme to 
be consistent with the continuous limit case where the step size~$\tau$ tends 
towards $0$; see also \cite{RH20}.

For the residual network, where no time step size $\tau$ is learned explicitly, 
we set the time step size to the inverse of the number of blocks. This requires 
to use a different smoothness parameter $\beta$. We tune the smoothness 
parameters for all architectures such that their parameters exhibit 
similarly smooth evolutions over time. Numerical parameters such as time step 
size and extrapolation parameters are not affected by the regulariser.

Figure~\ref{fig:dynamic} presents the performance of time dynamic architectures 
with a single channel. We use $\beta=5$ for the standard ResNets, and 
$\beta=10$ for all other architectures. In contrast to the previous 
comparisons, we now compare the denoising quality against the number of network 
parameters. This allows us to measure performance against model complexity. 

For the symmetric ResNet we observe the same behaviour as in 
the setting without a temporal dynamic. The overall best performance of the 
still on par with the respective classical diffusion process for the 
Charbonnier and Perona--Malik activation functions. 

However, the time dynamic allows this model to achieve better denoising quality 
for a fewer number of blocks. For example, the symmetric ResNet with 
Perona--Malik activation and seven residual blocks can achieve a denoising 
quality of $36.51$~dB if weights are shared, but already $37.08$~dB with a 
regularised temporal dynamic. Similar observations for classical diffusion 
processes with a time dynamic diffusivity function can be found 
in~\cite{GZS01}. Yet, this effectiveness comes at the price of additional 
parameters.

The Du Fort--Frankel and FSI networks allow for higher efficiency 
at the cost of more network parameters. Especially in the case of FSI, the 
trainable extrapolation parameters help to achieve significantly better 
performance when more residual blocks are provided. This is in accordance with 
observations in the literature \cite{LZLD18}.

The proposed architectures outperform the standard ResNet for the same 
amount of parameters when using Charbonnier and Perona--Malik activations. 
This shows that the model reduction to a symmetric convolution structure is 
indeed fruitful. Moreover, the ranking of activation functions remains the same 
in most cases. 

None of the architectures significantly outperform the classical 
Perona--Malik diffusion process. This supports our claim that these tightly 
constrained networks realise a numerical algorithm at their core. Different 
architectures can solve the problem with varying efficiency, but they converge 
towards the same result. This will only change when we allow for more 
flexibility within the network architecture, e.g. by utilising multiple network 
channels.

\begin{figure*}[h!]
  \centering 
  \resizebox{0.32\linewidth}{!}{\begin{tikzpicture}[scale = 1.0]

\begin{axis}
[
width=0.9*\axisdefaultwidth,
height=0.75*\axisdefaultwidth,
samples=500,
axis lines=left,
ymin = 30.5, ymax = 42,
xmin = 700, xmax = 40000, xmode=log,
ytick = {31, 33, ..., 41},
ylabel={PSNR},
xlabel={number of parameters, logarithmic scale},
xlabel near ticks,
ylabel near ticks,
grid = major,
legend entries={ResNet,
                SymResNet, 
                Du Fort--Frankel,
                FSI,
                Linear diffusion},
title = {\textbf{ReLU activation}},
legend cell align=left,
legend style={at={(0.41, 0.32)}, anchor = west}
]

\addplot+[thick, dotted, mark options={solid}, black, mark = *,
restrict expr to domain={\thisrow{modelid}}{\STANDARDRESNET:\STANDARDRESNET},
restrict expr to domain={\thisrow{activationid}}{\RELU:\RELU},
restrict expr to domain={\thisrow{couplingid}}{\UNCOUPLED:\UNCOUPLED},
restrict expr to domain={\thisrow{channels}}{16:16},
restrict expr to domain={\thisrow{kernelsize}}{3:3},
restrict expr to domain={\thisrow{smoothness}}{0.0:0.0},
unbounded coords=discard]
table[x = parameters, y = psnr]
{test_errors};

\addplot+[thick, dashed, mark options={solid}, red, mark = square*,
restrict expr to domain={\thisrow{modelid}}{\STABLERESNET:\STABLERESNET},
restrict expr to domain={\thisrow{activationid}}{\RELU:\RELU},
restrict expr to domain={\thisrow{couplingid}}{\UNCOUPLED:\UNCOUPLED},
restrict expr to domain={\thisrow{channels}}{16:16},
restrict expr to domain={\thisrow{kernelsize}}{3:3},
restrict expr to domain={\thisrow{smoothness}}{0.0:0.0},
unbounded coords=discard]
table[x = parameters, y = psnr]
{test_errors};

\addplot+[thick, dashed, mark options={solid}, blue, mark = diamond*,
restrict expr to domain={\thisrow{modelid}}{\DUFORTFRANKEL:\DUFORTFRANKEL},
restrict expr to domain={\thisrow{activationid}}{\RELU:\RELU},
restrict expr to domain={\thisrow{couplingid}}{\UNCOUPLED:\UNCOUPLED},
restrict expr to domain={\thisrow{channels}}{16:16},
restrict expr to domain={\thisrow{kernelsize}}{3:3},
restrict expr to domain={\thisrow{smoothness}}{0.0:0.0},
unbounded coords=discard]
table[x = parameters, y = psnr]
{test_errors};

\addplot+[thick, densely dotted, mark options={solid}, black!60!green, mark = 
star,
restrict expr to domain={\thisrow{modelid}}{\FSI:\FSI},
restrict expr to domain={\thisrow{activationid}}{\RELU:\RELU},
restrict expr to domain={\thisrow{couplingid}}{\UNCOUPLED:\UNCOUPLED},
restrict expr to domain={\thisrow{channels}}{16:16},
restrict expr to domain={\thisrow{kernelsize}}{3:3},
restrict expr to domain={\thisrow{smoothness}}{0.0:0.0},
unbounded coords=discard]
table[x = parameters, y = psnr]
{test_errors};

\addplot+[dashed, black, thick, mark=none, domain=0.1:50000] plot 
(\x,\bestHDpsnr);
\end{axis}

\end{tikzpicture}}
  \resizebox{0.32\linewidth}{!}{\begin{tikzpicture}[scale = 1.0]

\begin{axis}
[
width=0.9*\axisdefaultwidth,
height=0.75*\axisdefaultwidth,
samples=500,
axis lines=left,
ymin = 30.5, ymax = 42,
xmin = 700, xmax = 40000, xmode=log,
ytick = {31, 33, ..., 41},
ylabel={PSNR},
xlabel={number of parameters, logarithmic scale},
xlabel near ticks,
ylabel near ticks,
grid = major,
legend entries={ResNet,
                SymResNet, 
                Du Fort--Frankel,
                FSI,
                Charbonnier diffusion},
title = {\textbf{Charbonnier activation}},
legend cell align=left,
legend style={at={(0.3, 0.25)}, anchor = west}
]

\addplot+[thick, dotted, mark options={solid}, black, mark = *,
restrict expr to domain={\thisrow{modelid}}{\STANDARDRESNET:\STANDARDRESNET},
restrict expr to domain={\thisrow{activationid}}{\CHARBONNIER:\CHARBONNIER},
restrict expr to domain={\thisrow{couplingid}}{\UNCOUPLED:\UNCOUPLED},
restrict expr to domain={\thisrow{channels}}{16:16},
restrict expr to domain={\thisrow{kernelsize}}{3:3},
restrict expr to domain={\thisrow{smoothness}}{0.0:0.0},
unbounded coords=discard]
table[x = parameters, y = psnr]
{test_errors};

\addplot+[thick, dashed, mark options={solid}, red, mark = square*,
restrict expr to domain={\thisrow{modelid}}{\STABLERESNET:\STABLERESNET},
restrict expr to domain={\thisrow{activationid}}{\CHARBONNIER:\CHARBONNIER},
restrict expr to domain={\thisrow{couplingid}}{\UNCOUPLED:\UNCOUPLED},
restrict expr to domain={\thisrow{channels}}{16:16},
restrict expr to domain={\thisrow{kernelsize}}{3:3},
restrict expr to domain={\thisrow{smoothness}}{0.0:0.0},
unbounded coords=discard]
table[x = parameters, y = psnr]
{test_errors};

\addplot+[thick, dashdotted, mark options={solid}, blue, mark = diamond*,
restrict expr to domain={\thisrow{modelid}}{\DUFORTFRANKEL:\DUFORTFRANKEL},
restrict expr to domain={\thisrow{activationid}}{\CHARBONNIER:\CHARBONNIER},
restrict expr to domain={\thisrow{couplingid}}{\UNCOUPLED:\UNCOUPLED},
restrict expr to domain={\thisrow{channels}}{16:16},
restrict expr to domain={\thisrow{kernelsize}}{3:3},
restrict expr to domain={\thisrow{smoothness}}{0.0:0.0},
unbounded coords=discard]
table[x = parameters, y = psnr]
{test_errors};

\addplot+[thick, densely dotted, mark options={solid}, black!60!green, mark = 
star,
restrict expr to domain={\thisrow{modelid}}{\FSI:\FSI},
restrict expr to domain={\thisrow{activationid}}{\CHARBONNIER:\CHARBONNIER},
restrict expr to domain={\thisrow{couplingid}}{\UNCOUPLED:\UNCOUPLED},
restrict expr to domain={\thisrow{channels}}{16:16},
restrict expr to domain={\thisrow{kernelsize}}{3:3},
restrict expr to domain={\thisrow{smoothness}}{0.0:0.0},
unbounded coords=discard]
table[x = parameters, y = psnr]
{test_errors};

\addplot+[dashed, black, thick, mark=none, domain=0.1:50000] plot 
(\x,\bestCHpsnr);
\end{axis}

\end{tikzpicture}}
  \resizebox{0.32\linewidth}{!}{\begin{tikzpicture}[scale = 1.0]

\begin{axis}
[
width=0.9*\axisdefaultwidth,
height=0.75*\axisdefaultwidth,
samples=500,
axis lines=left,
ymin = 30.5, ymax = 42,
xmin = 700, xmax = 40000, xmode=log,
ytick = {31, 33, ..., 41},
ylabel={PSNR},
xlabel={number of parameters, logarithmic scale},
xlabel near ticks,
ylabel near ticks,
grid = major,
legend entries={ResNet,
                SymResNet, 
                Du Fort--Frankel,
                FSI,
                Perona--Malik diffusion},
title = {\textbf{Perona--Malik activation}},
legend cell align=left,
legend style={at={(0.3, 0.25)}, anchor = west}
]

\addplot+[thick, dotted, mark options={solid}, black, mark = *,
restrict expr to domain={\thisrow{modelid}}{\STANDARDRESNET:\STANDARDRESNET},
restrict expr to domain={\thisrow{activationid}}{\PERONAMALIK:\PERONAMALIK},
restrict expr to domain={\thisrow{couplingid}}{\UNCOUPLED:\UNCOUPLED},
restrict expr to domain={\thisrow{channels}}{16:16},
restrict expr to domain={\thisrow{kernelsize}}{3:3},
restrict expr to domain={\thisrow{smoothness}}{0.0:0.0},
unbounded coords=discard]
table[x = parameters, y = psnr]
{test_errors};

\addplot+[thick, dashed, mark options={solid}, red, mark = square*,
restrict expr to domain={\thisrow{modelid}}{\STABLERESNET:\STABLERESNET},
restrict expr to domain={\thisrow{activationid}}{\PERONAMALIK:\PERONAMALIK},
restrict expr to domain={\thisrow{couplingid}}{\UNCOUPLED:\UNCOUPLED},
restrict expr to domain={\thisrow{channels}}{16:16},
restrict expr to domain={\thisrow{kernelsize}}{3:3},
restrict expr to domain={\thisrow{smoothness}}{0.0:0.0},
unbounded coords=discard]
table[x = parameters, y = psnr]
{test_errors};

\addplot+[thick, dashdotted, mark options={solid}, blue, mark = diamond*,
restrict expr to domain={\thisrow{modelid}}{\DUFORTFRANKEL:\DUFORTFRANKEL},
restrict expr to domain={\thisrow{activationid}}{\PERONAMALIK:\PERONAMALIK},
restrict expr to domain={\thisrow{couplingid}}{\UNCOUPLED:\UNCOUPLED},
restrict expr to domain={\thisrow{channels}}{16:16},
restrict expr to domain={\thisrow{kernelsize}}{3:3},
restrict expr to domain={\thisrow{smoothness}}{0.0:0.0},
unbounded coords=discard]
table[x = parameters, y = psnr]
{test_errors};

\addplot+[thick, densely dotted, mark options={solid}, black!60!green, mark = 
star,
restrict expr to domain={\thisrow{modelid}}{\FSI:\FSI},
restrict expr to domain={\thisrow{activationid}}{\PERONAMALIK:\PERONAMALIK},
restrict expr to domain={\thisrow{couplingid}}{\UNCOUPLED:\UNCOUPLED},
restrict expr to domain={\thisrow{channels}}{16:16},
restrict expr to domain={\thisrow{kernelsize}}{3:3},
restrict expr to domain={\thisrow{smoothness}}{0.0:0.0},
unbounded coords=discard]
table[x = parameters, y = psnr]
{test_errors};

\addplot+[dashed, black, thick, mark=none, domain=0.1:50000] plot 
(\x,\bestPMpsnr);
\end{axis}

\end{tikzpicture}}
    \caption{Denoising quality of network architectures with varying depth and 
    $C=16$ network channels. Each plot is concerned with a different 
    activation function. The proposed architectures outperform the standard 
    ResNet for the same amount of parameters. In this setting, the acceleration 
    strategies are on par with the symmetric ResNet, and the margin between 
    activation functions becomes smaller.
    \label{fig:multichannel}}
\end{figure*}
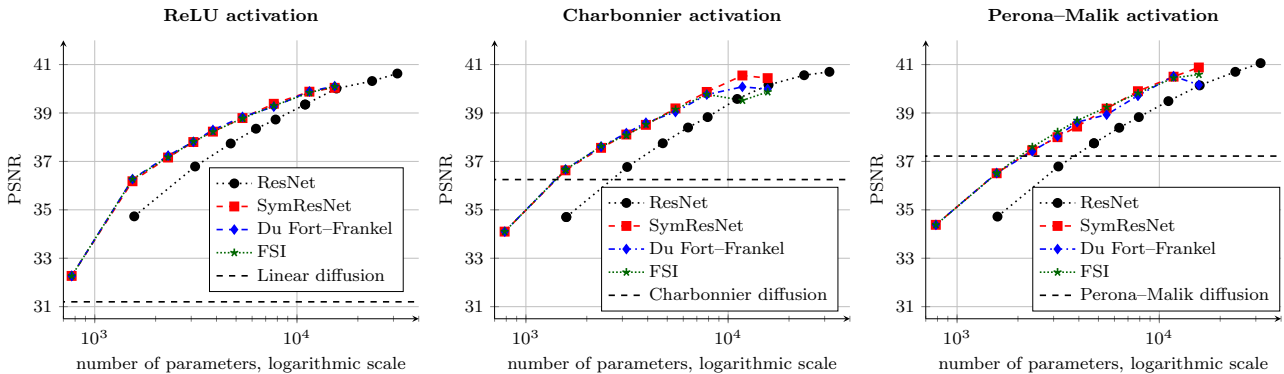

\subsubsection{Towards Larger Networks}
So far, we have only considered architectures with a single network channel. 
However, typical CNNs show their full potential when using multiple channels. 
In this case, the simplicity of the ReLU activation is compensated by a rich 
set of convolutions between channels.

We now extend our evaluations to architectures with multiple network channels. 
For simplicity, we leave the number of channels constant throughout the 
network. To this end, we copy the input signal into $C$ channels. The output 
signal is computed as the average over the individual channel results. 
In between, we employ the proposed symmetric residual blocks which now use 
$C\times C$ block convolution matrices with the appropriate stability 
constraints. These convolutions can be interpreted as ensembles of
differential operators which are applied to the signal. The channels are then 
activated individually, and convolved again with the adjoint counterparts of 
the differential operators.

 To allow for maximum flexibility and performance, we remove the 
temporal parameter regularisation in this experiment.

The denoising performance of the networks are visualised in 
Figure~\ref{fig:multichannel} for the case of $C=16$ channels.

This experiment is the first instance where all architectures significantly 
outperform their respective classical diffusion counterparts. The multi-channel 
architecture allows to approximate a more sophisticated denoising model, as 
information in the various channels is exchanged by means of multi-channel 
convolution operators.

All proposed architectures can still outperform the standard ResNet for the 
same amount of parameters. In this case, the extrapolation methods are on par 
with the symmetric ResNet. Interestingly, the ranking of activation functions 
remains the same, albeit with a much smaller margin. The symmetric ResNet with 
$20$ residual blocks yields PSNR values of $40.04$~dB, $40.44$~dB and 
$40.88$~dB for the ReLU, Charbonnier and Perona--Malik activations, 
respectively. We conclude that the more complex the network, the less the 
activation function matters for performance. On the contrary, this means that 
networks might be drastically reduced in size when trading network size for 
sophisticated activation function design.

\subsection{Learning a Multigrid Solver for Inpainting}
\label{sec:mgexp}
So far, it is not clear if our interpretation of the two-grid FAS is a 
reasonable model reduction of a full U-net. To prove that this interpretation 
is indeed of practical relevance, we show how it can be used to learn a 
multigrid solver for edge-enhancing diffusion inpainting of images 
\cite{We94e,WW06,GWWB08}.  

Diffusion-based inpainting aims to restore an image from a sparse set of known 
data points~\cite{GWWB08,WW06}. Diffusion processes allow for a high 
reconstruction quality even for extremely sparse known data, making them 
an interesting tool for image compression applications; see 
e.g.~\cite{GWWB08,SPME14}. 
Of particular interest is the edge-enhancing diffusion (EED) 
operator~\cite{We94e} as it allows to reconstruct discontinuous image data such 
as edges. 

As a proof of concept, we construct a network implementing a full multigrid 
structure. We replace the prescribed nonlinear solvers by trainable 
feed-forward layers that learn to approximate the PDE at hand. 

\subsubsection{Edge-enhancing Diffusion Inpainting}
The EED inpainting problem can be formulated as follows. 
Given a set of known image data $f: K\rightarrow \mathbb{R}$ on a subset $K$ of 
the image domain $\Omega \subset \mathbb{R}^2$, the goal is to compute a 
reconstruction $u$ as the solution of the PDE 
\begin{equation}\label{eq:inpaint}
  (1-c(\bm x)) \bm \nabla^\top\!\!\left(\bm D \bm \nabla u\right) - c(\bm x) 
  (u-f) = 0.
\end{equation}
Here, $c(\bm x)$ is a binary confidence function indicating whether the data at 
position $\bm x$ is known or not. The case $c(\bm x) = 1$ indicates known data, 
yielding $u=f$. The case $c(\bm x) = 0$ indicates that the data needs 
to be reconstructed by EED inpainting. Consequently, $c(\bm x)$ is the 
characteristic function of the inpainting mask $K$.

The EED operator $\bm \nabla^\top\!\!\left(\bm D \bm \nabla u\right)$ uses a 
diffusion tensor $\bm D = g(\bm \nabla u_\sigma \bm \nabla u_\sigma^\top)$ 
based on a Gaussian smoothed gradient $\bm \nabla_\sigma$ and a nonlinear 
diffusivity function $g$. It is a $2 \times 2$ positive semidefinite matrix, 
which is designed to propagate information along locally dominant structures 
\cite{We94e}. As a diffusivity function, we use the Charbonnier diffusivity 
\cite{CBAB94}, which relies on a contrast parameter $\lambda$.

\begin{figure}[t]
\centering
\begin{tikzpicture}

\draw[-, dashed, thick] (0,0) -- (7,0);
\draw[-, dashed, thick] (0,1) -- (7,1);
\draw[-, dashed, thick] (0,2) -- (7,2);

\node[yshift=-5mm, anchor=east] (coarse) at (0,0) {\textbf{coarse}};
\node[xshift=-3mm] (hhhh) at (0,0) {$4h$};
\node[xshift=-3mm] (hhhh) at (0,1) {$2h$};
\node[xshift=-3mm] (hhhh) at (0,2) {$h$};
\node[yshift=5mm, anchor=east] (fine) at (0,2) {\textbf{fine}};


\node[shape=circle, fill,scale=0.6] (A) at (0.3,0) {};
\node[shape=circle, fill,scale=0.6] (B) at (0.3+1*6.4/11,1) {};
\node[shape=circle, fill,scale=0.6] (C) at (0.3+2*6.4/11,0) {};
\node[shape=circle, fill,scale=0.6] (D) at (0.3+3*6.4/11,1) {};
\node[shape=circle, fill,scale=0.6] (E) at (0.3+4*6.4/11,2) {};
\node[shape=circle, fill,scale=0.6] (F) at (0.3+5*6.4/11,1) {};
\node[shape=circle, fill,scale=0.6] (G) at (0.3+6*6.4/11,0) {};
\node[shape=circle, fill,scale=0.6] (H) at (0.3+7*6.4/11,1) {};
\node[shape=circle, fill,scale=0.6] (I) at (0.3+8*6.4/11,0) {};
\node[shape=circle, fill,scale=0.6] (J) at (0.3+9*6.4/11,1) {};
\node[shape=circle, fill,scale=0.6] (K) at (0.3+10*6.4/11,2) {};

\draw[-, thick] (A) -- (B);
\draw[-, thick] (B) -- (C);
\draw[-, thick] (C) -- (D);
\draw[-, thick] (D) -- (E);
\draw[-, thick] (E) -- (F);
\draw[-, thick] (F) -- (G);
\draw[-, thick] (G) -- (H);
\draw[-, thick] (H) -- (I);
\draw[-, thick] (I) -- (J);
\draw[-, thick] (J) -- (K);

\end{tikzpicture}
\caption{Visualization of the full multigrid strategy which we employ in our 
experiments. Dashed horizontal lines denote the three grids, and grids become 
coarser from top to bottom. Each circle denotes an instance of a solver. 
\label{fig:fmg}}
\end{figure}
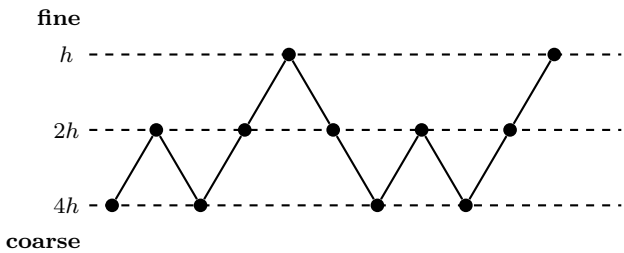

\begin{figure*}[h!]
\centering
\setlength{\tabcolsep}{2pt}
\begin{tabular}{ccc}
\includegraphics[width=0.21\textwidth]{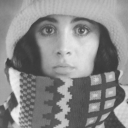} 
&\includegraphics[width=0.21\textwidth]{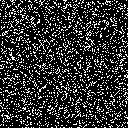} 
&\includegraphics[width=0.21\textwidth]{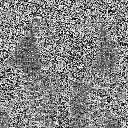}
\\[1mm]
original
& mask 
& \makecell{single grid network \\ residual $0.41$} 
\\ 
\includegraphics[width=0.21\textwidth]{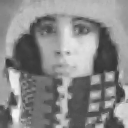}
&\includegraphics[width=0.21\textwidth]{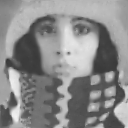} 
&\includegraphics[width=0.21\textwidth]{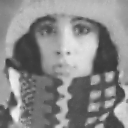} 
\\[1mm]
 \makecell{full multigrid network \\ residual $1.7 \times 10^{-2}$}
 & \makecell{standard U-net \\ residual $7.4\times10^{-3}$}
& \makecell{CG solver \\ residual $1.0 \times 10^{-8}$}
\end{tabular}
\caption{Reconstruction quality of a single grid network, a full multigrid 
network, a standard U-net, and a classical CG solver for the EED 
inpainting problem. All results use $\lambda=0.93, \sigma=0.97$ and the same 
random mask with $20\%$ density. Both the full multigrid network and the 
standard U-net approximate an EED inpainting result, while a single grid 
network fails to do so. 
\label{fig:inpainting}}
\end{figure*} 
\subsubsection{Experimental Setup}
Our trainable FMG architecture is designed as follows: Instead of prescribing 
nonlinear solvers on each grid, we employ a series of
convolutional layers with trainable weights. The remainder of the architecture 
is fixed: We set the restriction operators to a simple 
averaging over a $2\times 2$ pixel neighbourhood, and the prolongation operators
to nearest neighbour interpolation. 

Since FMG employs the same solver on each 
grid, we realise this idea also in our network by sharing the weights between 
all solvers for a specific grid. This drastically reduces the amount of 
parameters and incites that an iterated application of the solvers performs the 
correct computations.

Instead of training our network by minimizing a Euclidean loss between ground 
truth data and inpainting reconstructions, we use the absolute residual of a 
discretisation of the inpainting equation \eqref{eq:inpaint} as a loss 
function. This is closely related to the idea of deep energies~\cite{GFE21}, 
where one chooses a variational energy as a loss function. Since we do not 
have such an energy available for EED inpainting, we resort to minimising the 
absolute residual of the associated Euler--Lagrange equation, which is given by 
\eqref{eq:inpaint}. This guarantees that the trained architecture realises EED 
inpainting as efficiently as possible. To discretise the Euler--Lagrange 
equation, we employ the standard discretisation from~\cite{WWW13}.

Whereas classical solution methods for the inpainting problem specify the 
known data $u=f$ on $\Omega\backslash K$ by means of Dirichlet boundary 
conditions, we leave it to the network to reproduce also the known data. We 
have found that this leads to a better approximation quality.

The inpainting masks consist of randomly sampled pixels with a density 
$d$ as a percentage of the number of image pixels. Since the masks are also 
required on coarser grids to compute the residual within the FMG architecture, 
we downsample them by putting defining a coarse pixel as known, if at least one 
pixel in the $2 \times 2$ cell on the fine grid is known.

We train the architecture on a subset of $1000$ images of the ImageNet dataset 
\cite{RDSK15} with the Adam optimiser~\cite{KB14} with standard settings.

\subsubsection{Evaluation of the Full Multigrid Network}
We construct a full multigrid network using three grids 
of size $h, 2h$ and, $4h$. The order in which the problem is solved on 
different grids is given by $[4h, 2h, 4h, 2h, h, 2h,$ $4h, 2h, 4h, 2h, 
h]$. This is the simplest FMG strategy that can be employed in 
a setting involving three grids and serves as a proof-of-concept 
architecture. We visualise this strategy in Figure \ref{fig:fmg}. Thus, we 
employ $11$ solvers, each using $12$ feed-forward convolutional layers with 
$20$ channels and ReLU activations. Weights are shared for solvers on the same 
grid.

We train this network on an EED inpainting problem with random masks 
of $20\%$ density. The EED parameters $\lambda=0.93, \sigma=0.97$ have 
been optimised for inpainting quality with a simple grid search.

To show how the FMG network can benefit from the multigrid 
structure, we compare it against two networks with the same amount of 
parameters. One network solves the problem only on a single grid by using $25$ 
layers and $24$ channels. Moreover, we compare our FMG network to a standard 
U-net with addition with three scales, $17$ channels and $2$ layers per scale. 
All three models contain $1.2 \times 10^5$ trainable parameters.

Figure~\ref{fig:inpainting} shows the inpainting results for the networks at 
the example of the image \emph{trui}. Moreover, we present the true inpainting 
result obtained from a conjugate gradient (CG) solver for EED inpainting. 

In contrast to the single grid network, the FMG network and the U-net are 
able to approximate the EED inpainting result. The FMG and U-net results 
are visually comparable, while the residual of the U-net is slightly better. 
This does not only show that the multigrid structure is an adequate network 
design, but also that a standard U-net is not able to obtain a much better 
solution in this case. From the perspective of numerical algorithms, this is 
expected, since we know that multigrid methods are highly efficient for these 
problems. 

The advantage of the FMG network is that adding further solvers on the 
three scales will not inflate the number of parameters as these solvers are 
shared. This is in contrast to the U-net, where any addition increases the 
trainable parameter set. The architectural design of the FMG network suggests 
that U-nets should also be constructed in a similar way. In practice, already 
concatenating multiple U-nets~\cite{NYD16} is a successful idea. Instead of a 
single down- and upsampling pass, multiple alternating computations on 
different resolutions should be beneficial.

\begin{table*}[h!]
\begin{center}
\renewcommand{\arraystretch}{1.1}
\resizebox{0.9\linewidth}{!}{
\begin{tabular}{|l|l|l}
\cline{1-2} 
Numerical Concept 
& Neural Concept\Tstrut\Bstrut 
&
\\ \cline{1-2}  
  numerical algorithm
& neural network architecture\Tstrut
& \multirow{3}{*}{$\left.   
  \vphantom{\begin{tabular}{c}x\\x\\x\end{tabular}}\right\}$ 
  \textbf{interpretation}}
\\
evolution equation
& trained neural network
&
\\
specification of nonlinear dynamics
& training
&
\\[1mm]
explicit scheme
& residual network
& \multirow{3}{*}{$\left.   
  \vphantom{\begin{tabular}{c}x\\x\\x\end{tabular}}\right\}$ \textbf{coarse 
  connections}}
\\
implicit scheme
& recurrent network
&
\\
multigrid techniques
& U-net architectures
&
\\[1mm]
time level 
& residual block
& \multirow{5}{*}{$\left.   
  \vphantom{\begin{tabular}{c}x\\x\\x\\x\\x\end{tabular}}\right\}$  
  \textbf{detailed connections}}
\\
flux function 
& activation function 
&
\\
spatial derivative 
& convolution kernel 
&
\\ 
temporal derivative 
& skip connection
& 
\\
acceleration strategies 
& weighted skip connections\Bstrut\\\cline{1-2}  
\end{tabular}}
\end{center} 
\caption{Overview of the connections between numerical and neural 
concepts which we have encountered. \label{tab:connections}}
\end{table*}
\section{Discussion and Conclusions}\label{sec:conc}
We have shown that numerical algorithms for diffusion evolutions 
share structural connections to CNN architectures and inspire novel design 
concepts. 

Explicit diffusion schemes yield a specific form of residual networks with a 
symmetric filter structure, for which one can prove Euclidean stability. 
Moreover, this architecture saves half of the network parameters, and its 
stability constraint is easy to implement without affecting its performance. In 
addition, our connection suggests the use of a diffusion flux function as an 
activation, revitalising the idea of nonmonotone activation functions. We have 
shown that these activations perform well for a denoising task, even when using 
them within standard architectures in a plug-and-play fashion. 

By investigating accelerated explicit schemes and implicit 
schemes we have justified the effectiveness of skip connections in neural 
networks. They realise time discretisations in explicit schemes, extrapolation 
terms to increase their efficiency, and recurrent connections in implicit 
schemes with fixed point structure. In practice, the resulting architectures 
are particularly useful when training networks with small amounts of layers. 

Lastly, our connection between multigrid concepts and U-net architectures 
serves as a basis for explaining their success. We have shown that a U-net 
architecture is able to implement a full multigrid strategy, which allows to 
learn efficient solutions for PDEs which are typically hard to solve. By 
directly using the residual norm as a loss function, we can guarantee that the 
network approximates the PDE at hand. This suggests to extend the standard 
U-net architecture in a full multigrid fashion.

Our philosophy of identifying numerical concepts as core building blocks of 
neural architectures has proven to be fruitful. Our direct translation has 
yielded structural insights into popular neural networks and inspired 
well-founded neural building blocks with practical relevance. An overview of 
the detailed connections that we have encountered is presented in Table 
\ref{tab:connections}.

Our numerical perspective on neural networks differs from most viewpoints in 
the literature. However, it provides a blueprint for directly translating a 
plethora of numerical strategies into well-founded and practically relevant 
neural building components. We hope that this line of research leads to a 
closer connection of both worlds and to hybrid methods that unite the stability 
and efficiency of modern numerical algorithms with the performance of neural 
networks.

\appendix
\section{Stability Proof for Generalised Du~Fort--Frankel Schemes}
\label{app:dufort}
In this section we extend the stability proof for linear Du Fort--Frankel 
Schemes of~\cite{GG76} to the nonlinear setting. 

First we rewrite the scheme \eqref{eq:dufort} as a multi-step method, obtaining
\begin{equation}
 \label{eq:DF_multi}
 \begin{pmatrix}
  \bm{u}^{k+1} \\
  \bm{u}^{k}
 \end{pmatrix}
 =
 \begin{pmatrix}
 \frac{4 \tau \alpha}{1 + 2 \tau \alpha} \bm{I}
 -\frac{2 \tau }{1 + 2 \tau \alpha} 
   \bm{A}\!\left(\bm{u}^{k}\right)
  &
  \frac{1 - 2 \tau \alpha}{1 + 2 \tau \alpha}
  \bm{I} 
  \\[2mm]
  \bm{I}
  &
  \bm{0}
 \end{pmatrix}
 \begin{pmatrix}
  \bm{u}^{k} \\
  \bm{u}^{k-1}
 \end{pmatrix}.
\end{equation}
Here, we have abbreviated $\bm A(\bm u^k) = \bm K^\top \bm G(\bm u^k) \bm K$.

To analyse the stability of the Du Fort--Frankel scheme, we have to show that 
all eigenvalues of the matrix of the multistep method \eqref{eq:DF_multi}
have an absolute value less than or equal to one. Note that this matrix is not 
symmetric such that it might have complex eigenvalues.

Let us first define as short-hand notations
\begin{equation}
 \begin{split}
 \bm{Q} \coloneqq&\   
 \frac{4 \tau \alpha}{1 + 2 \tau \alpha} \bm{I}
 - \frac{2 \tau }{1 + 2 \tau \alpha} 
   \bm{A}\!\left(\bm{u}^{k}\right),
 \\[1ex]
 \bm{B}\coloneqq&\
 \begin{pmatrix}
  \bm{Q} & \frac{1 - 2 \tau \alpha}{1 + 2 \tau \alpha} \bm{I} \\[2mm]
  \bm{I} & \bm{0}
 \end{pmatrix}.
 \end{split}
\end{equation}

We start with the naive approach to compute eigenvalues of $\bm{B}$: 
$\mu \in \mathbb{C}$ is an eigenvalue of $\bm{B}$ if
\begin{equation}
 \label{eq:ev_B_01}
 \det\!\left(\bm{B} - \mu \bm{I}\right) 
 =
 \det\!\left(
  \begin{pmatrix}
   \bm{Q} - \mu \bm I & \frac{1 - 2 \tau \alpha}{1 + 2 \tau \alpha} \bm{I} 
   \\[2mm]
   \bm{I} & - \mu \bm I
  \end{pmatrix}
 \right) = 0.
\end{equation}
As $\bm B - \mu \bm I$ is a block matrix containing square blocks of the same 
shape where the lower two blocks commute, we have
\begin{equation}
\begin{aligned}
 \det\!\left(\bm B - \mu \bm{I}\right)
 &= \det\!\left(
      \left(\bm{Q} - \mu \bm{I}\right) (-\mu \bm{I}) 
    - \frac{1-2\tau\alpha}{1+2\tau\alpha} \bm{I}
   \right)
   \\
 &= \det\!\left(
    \mu^{2} \bm{I} -\mu \bm{Q} - \frac{1-2\tau\alpha}{1+2\tau\alpha} \bm{I} 
   \right).
\end{aligned}
\end{equation}

To proceed from here, it is reasonable to involve the eigenvalues of
the matrix~$\bm{Q}$. As $\bm{Q}$ is real-valued and symmetric, there exist an 
orthogonal matrix $\bm{V}$ of eigenvectors of $\bm{Q}$ and a diagonal matrix 
$\bm{\Gamma}$ with the eigenvalues $\gamma$ of $\bm{Q}$ on its diagonal such 
that
\begin{equation}
 \bm{Q} = \bm{V} \bm{\Gamma} \bm{V}^{T}.
\end{equation}
As $\bm{V}$ is an orthogonal matrix, it holds that
\begin{equation}
 \bm{I} = \bm{V} \bm{V}^{T}.
\end{equation}
Plugging both of these relations into \eqref{eq:ev_B_01} yields
\begin{equation}
 \label{eq:ev_B_02}
 \resizebox{\linewidth}{!}{$
 \begin{aligned}
 \det\!\left(\bm{B} - \mu \bm{I}\right)
 &=
   \det\!\left(
     \mu^{2} \bm{V} \bm{V}^{T}
    -\mu \bm{V} \bm{\Gamma} \bm{V}^{T} 
    - \frac{1-2\tau\alpha}{1+2\tau\alpha} \bm{V} \bm{V}^{T} 
   \right)
 \\
 &= \det\!\left(
    \bm{V}
     \left( 
       \mu^{2} \bm{I}
      -\mu  \bm{\Gamma}  
      - \frac{1-2\tau\alpha}{1+2\tau\alpha} \bm{I} 
     \right)
    \bm{V}^{T} 
   \right)
 \\  
 &=\det\!\left(\bm{V}\right) 
   \det\!\left(
    \mu^{2} \bm{I} -\mu  \bm{\Gamma} - \frac{1-2\tau\alpha}{1+2\tau\alpha} 
    \bm{I} 
   \right)
   \det\!\left(\bm{V}^{T}\right)
 \\
 &= \det\!\left(
    \mu^{2} \bm{I} -\mu  \bm{\Gamma} - \frac{1-2\tau\alpha}{1+2\tau\alpha} 
    \bm{I} 
   \right),
 \end{aligned}
 $}
\end{equation}
since the determinants of the orthogonal matrices $\bm{V}$ and $\bm{V}^{T}$ are
both 1. The remaining determinant is concerned with a 
diagonal matrix. Thus, it is equal to the product of the diagonal elements, i.e.
\begin{equation}
 \det\!\left(\bm{B} - \mu \bm{I}\right)
 = \prod_{j=1}^{N}
   \left( 
    \mu^{2} -\mu\, \gamma_{j} - \frac{1-2\tau\alpha}{1+2\tau\alpha}
   \right).
\end{equation}
For $\mu$ to be an eigenvalue of $\bm{B}$, we need that this product 
vanishes. This is exactly the case if one or more factors in the 
product vanish. Hence, we get that for a fixed eigenvalue $\gamma$ of $\bm Q$, 
an eigenvalue $\mu$ of $\bm B$ satisfies 
\begin{equation}
 \mu = \frac{\gamma}{2} 
       \pm \sqrt{\frac{\gamma^{2}}{4} + \frac{1-2\tau\alpha}{1+2\tau\alpha}}.
\end{equation}
For stability of \eqref{eq:DF_multi}, we need that $|\mu | \leq 1$ for all 
solutions $\mu$ for all eigenvalues $\gamma$ of $\bm{Q}$. To proceed further, 
we 
consider the discriminant 
$\frac{\gamma^{2}}{4} + \frac{1-2\tau\alpha}{1+2\tau\alpha}$. Since we know that
$\bm{Q}$ is real-valued and symmetric, it follows that $\gamma$ is a real
number. Thus, $\gamma^{2}$ is positive. We also know that $\tau$ and $\alpha$ 
are positive, such that
\begin{equation}
 -1 < \frac{1-2\tau\alpha}{1+2\tau\alpha} < 1.
\end{equation}
Therefore, it is possible for the discriminant to have negative values, which 
results in complex eigenvalues $\mu$. Let us therefore distinguish 
the three cases of negative discriminant, vanishing 
discriminant, and positive discriminant. We will see that the last one is the 
only case which introduces a lower bound on $\alpha$ for unconditional 
stability. 

\textbf{Vanishing Discriminant.}
First, we consider a vanishing discriminant, which can only happen if 
$2\tau\alpha \geq 1$. This yields
\begin{equation}
\label{eq:ev_solution}
\gamma = \pm \, 2 \,\sqrt{\frac{2\tau\alpha-1}{2\tau\alpha+1}}.
\end{equation}
Thus, the eigenvalues of $\bm{B}$ are given by 
\begin{equation}
 \mu = \frac{\gamma}{2} = \pm \sqrt{\frac{2\tau\alpha-1}{2\tau\alpha+1}}.
\end{equation}
Since the fraction takes values between $0$ and $1$, the same is true for the
square root. Therefore, we have $|\mu| < 1$.

\textbf{Negative Discriminant.}
If the discriminant is negative, the 
corresponding values of $\mu$ are complex and we can write
\begin{equation}
 \mu = \frac{\gamma}{2} 
       \pm \mathrm{i} \,
       \sqrt{-\frac{\gamma^{2}}{4} - \frac{1-2\tau\alpha}{1+2\tau\alpha}}.
\end{equation}
Then we get for the squared absolute value of $\mu$
\begin{equation}
 \label{eq:complex_mu}
 \left\lvert \mu \right\rvert^{2} =
 \frac{\gamma^2}{4}
 +
 \left(-\frac{\gamma^{2}}{4} - \frac{1-2\tau\alpha}{1+2\tau\alpha}\right)
 =
 \frac{2\tau\alpha-1}{2\tau\alpha+1}.
\end{equation}
Surprisingly, this does not depend on $\gamma$ and we recover once more the
condition
\begin{equation}
 -1 < \frac{2\tau\alpha-1}{2\tau\alpha+1} < 1.
\end{equation}
Thus, we again have $|\mu| < 1$.

\textbf{Positive Discriminant.}
In this case, the eigenvalues $\mu$ are real-valued. Thus, we get the condition
\begin{equation}
 \label{eq:cond_xi01}
 -1 < 
 \frac{\gamma}{2} 
 \pm \sqrt{\frac{\gamma^{2}}{4} + \frac{1-2\tau\alpha}{1+2\tau\alpha}}
 < 1.
\end{equation}
If $\tau > 0$ and $\alpha > 0$, this system has the following solutions for 
$\gamma$: 
\begin{equation}
 \label{eq:cond_xi}
 \begin{aligned}
 &|\gamma| < \frac{4\tau\alpha}{1+2\tau\alpha}, \quad 
   &\text{if } \;\quad  0 \leq \frac{1-2\tau\alpha}{1+2\tau\alpha} < 
   1,
 \\
 \sqrt{\frac{8\tau\alpha-4}{2\tau\alpha+1}} \leq \, &|\gamma|
    < \frac{4\tau\alpha}{1+2\tau\alpha}, \quad 
  &\text{if }  -1 < \frac{1-2\tau\alpha}{1+2\tau\alpha} < 0.
 \end{aligned}
\end{equation}
If we treat $\gamma$ as a complex number for the moment, the first condition 
means that $\gamma$ has to be inside a disc of radius 
$\frac{4\tau\alpha}{1+2\tau\alpha}$ around the origin and the second condition
means that $\gamma$ has to be inside that disc, but outside or on the boundary
of a second disk of radius 
$\sqrt{\frac{8\tau\alpha-4}{2\tau\alpha+1}}$. Hence, the second condition is 
more restrictive.

It remains to determine conditions on $\tau$ and $\alpha$ such that 
\eqref{eq:cond_xi} is always satisfied for all eigenvalues
$\gamma$ of the matrix $\bm{Q}$. As the matrix $\bm{A}(\bm u^k)$ is real-valued 
and symmetric, we can diagonalise it in the same fashion as $\bm{Q}$: There 
exist an orthogonal matrix $\bm W$ and a diagonal matrix $\bm \Lambda$ with the 
eigenvalues $\lambda$ on its diagonal such that 
\begin{equation}
  \bm A(\bm u^k) = \bm W \bm \Lambda \bm W^\top.
\end{equation}
With the help of this representation, we can write 
\begin{equation}
\begin{aligned}
 \bm{Q} 
 &= \frac{4 \tau \alpha}{1 + 2 \tau \alpha} \bm{I}
   - \frac{2 \tau }{1 + 2 \tau \alpha} 
   \bm{A}\!\left(\bm{u}^{k}\right)
 \\
 &= \frac{4 \tau \alpha}{1 + 2 \tau \alpha} \bm{W} \bm{W}^{T}
 - \frac{2 \tau }{1 + 2 \tau \alpha} 
   \bm{W} \bm{\Lambda} \bm{W}^{T}
 \\
 &=\bm{W} \left(\frac{4 \tau \alpha}{1 + 2 \tau \alpha} \bm{I}
                - \frac{2 \tau }{1 + 2 \tau \alpha} \bm{\Lambda} \right)
   \bm{W}^{T}.
\end{aligned}
\end{equation}
Hence, the eigenvalues of $\bm{Q}$ are given by 
\begin{equation}
 \label{eq:ev_M_from_sys}
 \gamma = \frac{4 \tau \alpha}{1 + 2 \tau \alpha}
  - \frac{2 \tau }{1 + 2 \tau \alpha} \lambda,
\end{equation}
where $\lambda$ is an eigenvalue of $\bm{A}(\bm{u}^{k})$.

With this formula, the first case of \eqref{eq:cond_xi} reduces to
\begin{equation}
 \label{eq:cond_lambda01}
 0 < \lambda < 4 \alpha.
\end{equation}
This condition is similar to the stability condition of the explicit scheme, 
however now for $\alpha$ instead of $\frac{1}{\tau}$. The estimate on the 
left-hand side is always fulfilled if $\bm A(\bm u^k)$ is positive 
definite. For now, we assume that $\bm A(\bm u^k)$ is positive definite and 
consider the case of a zero eigenvalue afterwards.

The inequality \eqref{eq:cond_lambda01} has to hold for every eigenvalue of
$\bm A(\bm u^k)$. If $\bm A(\bm u^k)$ is positive definite, we can replace 
$\lambda$ by the spectral radius $\rho(\bm A(\bm u^k))$ as an upper bound. 
Hence, we arrive at 
\begin{equation}
 \label{eq:stab_cond}
 \alpha > \frac{\rho\!\left(\bm A(\bm u^k)\right)}{4}.
\end{equation}
This is in line with the result that has been derived for the linear case 
\cite{GG76}. 

For the second case in \eqref{eq:cond_xi}, plugging in 
\eqref{eq:ev_M_from_sys} and simplifying yields
\begin{equation}
 \label{eq:cond_lambda02}
 \begin{aligned}
  &0 < \lambda \leq 
  2 \alpha - \sqrt{4\alpha^{2}-\frac{1}{\tau^{2}}}
  \qquad \text{or} \\ 
  &2 \alpha + \sqrt{4\alpha^{2}-\frac{1}{\tau^{2}}}
  \leq 
  \lambda
  <
  4\alpha 
 \end{aligned}
\end{equation}
As we are in the case in which $-1 < \frac{1-2\tau\alpha}{1+2\tau\alpha} < 
0$, the square root is a non-negative real number. 

Moreover, we have to investigate what happens if
\begin{equation}\label{eq:lam_range}
 \begin{aligned}
  &-1 \leq \frac{1-2\tau\alpha}{1+2\tau\alpha} < 0
  \qquad \text{and} \\ 
 &2 \alpha - \frac{\sqrt{4\alpha^{2}\tau^{2}-1}}{\tau^{2}}
 < \lambda < 
 2 \alpha + \frac{\sqrt{4\alpha^{2}\tau^{2}-1}}{\tau^{2}}.
 \end{aligned}
\end{equation}
With some tedious but straight-forward computations, one can show that this 
case corresponds exactly to the case with a negative discriminant, which did 
not introduce any new stability conditions. 

Lastly, we consider the case where $\bm A(\bm u^k)$ is only positive 
semidefinite. If $2\tau \alpha < 1$, then $\lambda=0$ lies in the 
range given for $\lambda$ in \eqref{eq:lam_range}. For this case, we have 
already shown that $|\mu|<1$ and the scheme is stable. 

If $2\tau\alpha>1$, we obtain from \eqref{eq:ev_M_from_sys}:
\begin{equation}
  \gamma = \frac{4 \tau \alpha}{1 + 2 \tau \alpha}.
\end{equation}
We can plug this value of $\gamma$ into \eqref{eq:ev_solution} to see that 
$\mu=1$ 
is always a solution in this case. We have to ensure that all other solutions 
for $\mu$ fulfil $|\mu|<1$. The other solution of \eqref{eq:ev_solution} 
for $\lambda=0$ is
\begin{equation}
  \mu = \frac{2 \tau \alpha - 1}{2 \tau \alpha + 1},
\end{equation}
which yields $|\mu| < 1$ since $\tau > 0$ and $\alpha > 0$. All other 
eigenvalues of $\bm B$ have an absolute value of less than one by the 
considerations above. Thus, we can conclude that also in this case, the Du 
Fort--Frankel scheme is stable. 

Consequently, the only stability condition on $\alpha$ is the one in 
\eqref{eq:stab_cond}. In a similar manner as for the symmetric ResNet, we can 
transform this condition into an a priori constraint
\begin{equation}
  \alpha \geq \frac{L}{4} .
\end{equation}


%
%

\bibliographystyle{spmpsci}       
\bibliography{myrefs}         

\begin{thebibliography}{100}
\providecommand{\url}[1]{{#1}}
\providecommand{\urlprefix}{URL }
\expandafter\ifx\csname urlstyle\endcsname\relax
  \providecommand{\doi}[1]{DOI~\discretionary{}{}{}#1}\else
  \providecommand{\doi}{DOI~\discretionary{}{}{}\begingroup
  \urlstyle{rm}\Url}\fi

\bibitem{APWS21}
Alt, T., Peter, P., Weickert, J., Schrader, K.: Translating numerical concepts
  for {PDEs} into neural architectures.
\newblock In: A.~Elmoataz, J.~Fadili, Y.~Qu\'eau, J.~Rabin, L.~Simon (eds.)
  Scale Space and Variational Methods in Computer Vision, \emph{Lecture Notes
  in Computer Science}, vol. 12679, pp. 294--306. Springer, Cham (2021)

\bibitem{AW21}
Alt, T., Weickert, J.: Learning integrodifferential models for denoising.
\newblock In: Proc.~2021 IEEE International Conference on Acoustics, Speech and
  Signal Processing, pp. 2045--2049. IEEE Computer Society Press, Toronto,
  Canada (2021)

\bibitem{AWP20}
Alt, T., Weickert, J., Peter, P.: Translating diffusion, wavelets, and
  regularisation into residual networks.
\newblock {arXiv:2002.02753v3 [cs.LG]} (2020)

\bibitem{ABCM98}
Andreu, F., Ballester, C., Caselles, V., Maz\'on, J.M.: Minimizing total
  variation flow.
\newblock Differential and Integral Equations \textbf{14}(3), 321--360 (2001)

\bibitem{AH20}
Arridge, S., Hauptmann, A.: Networks for nonlinear diffusion problems in
  imaging.
\newblock Journal of Mathematical Imaging and Vision \textbf{62}, 471--487
  (2020)

\bibitem{AK06}
Aubert, G., Kornprobst, P.: Mathematical Problems in Image Processing: Partial
  Differential Equations and the Calculus of Variations, \emph{Applied
  Mathematical Sciences}, vol. 147, second edn.
\newblock Springer, New York (2006)

\bibitem{Ba97}
B\"aker, M.: Another look at neural multigrid.
\newblock International Journal of Modern Physics C \textbf{8}(2), 191--205
  (1997)

\bibitem{BMS93}
B\"aker, M., Mack, G., Speh, M.: Multigrid meets neural nets.
\newblock Nuclear Physics B - Proceedings Supplements \textbf{30}, 269--272
  (1993)

\bibitem{BSF94}
Bengio, Y., Simard, P., Frasconi, P.: Learning long-term dependencies with
  gradient descent is difficult.
\newblock IEEE Transactions on Neural Networks \textbf{5}(2), 157--166 (1994)

\bibitem{BCEO21}
Benning, M., Celledoni, E., Erhardt, M.J., Owren, B., Sch\"onlieb, C.: Deep
  learning as optimal control problems: models and numerical methods.
\newblock {IFAC-PapersOnline} \textbf{54}(9), 620--623 (2021)

\bibitem{Br77}
Brandt, A.: Multi-level adaptive solutions to boundary-value problems.
\newblock Mathematics of Computation \textbf{31}(138), 333--390 (1977)

\bibitem{BHM00}
Briggs, W.L., Henson, V.E., {McCormick}, S.F.: A Multigrid Tutorial, second
  edn.
\newblock SIAM, Philadelphia (2000)

\bibitem{BC10a}
Brito-Loeza, C., Chen, K.: Multigrid algorithm for high order denoising.
\newblock SIAM Journal on Imaging Sciences \textbf{3}(3), 363--389 (2010)

\bibitem{BWKS06}
Bruhn, A., Weickert, J., Kohlberger, T., Schn\"orr, C.: A multigrid platform
  for real-time motion computation with discontinuity-preserving variational
  methods.
\newblock International Journal of Computer Vision \textbf{70}(3), 257--277
  (2006)

\bibitem{BRRS21}
Bungert, L., Raab, R., Roith, T., Schwinn, L., Tenbrinck, D.: {CLIP:} {C}heap
  {L}ipschitz training of neural networks.
\newblock In: A.~Elmoataz, J.~Fadili, Y.~Qu\'eau, J.~Rabin, L.~Simon (eds.)
  Scale Space and Variational Methods in Computer Vision, \emph{Lecture Notes
  in Computer Science}, vol. 12679, pp. 307--319. Springer, Cham (2021)

\bibitem{CS05a}
Chan, T.F., Shen, J.: Image Processing and Analysis: Variational, PDE, Wavelet,
  and Stochastic Methods.
\newblock SIAM, Philadelphia (2005)

\bibitem{CMHR18}
Chang, B., Meng, L., Haber, E., Ruthotto, L., Begert, D., Holtham, E.:
  Reversible architectures for arbitrarily deep residual neural networks.
\newblock In: Proc.~32nd AAAI Conference on Artificial Intelligence, pp.
  2811--2818. New Orleans, LA (2018)

\bibitem{CBAB94}
Charbonnier, P., Blanc-F{\'e}raud, L., Aubert, G., Barlaud, M.: Two
  deterministic half-quadratic regularization algorithms for computed imaging.
\newblock In: Proc.~1994 IEEE International Conference on Image Processing,
  vol.~2, pp. 168--172. IEEE Computer Society Press, Austin, TX (1994)

\bibitem{CRBD18}
Chen, R.T.Q., Rubanova, Y., Bettencourt, J., Duvenaud, D.K.: Neural ordinary
  differential equations.
\newblock In: S.~Bengio, H.~Wallach, H.~Larochelle, K.~Grauman,
  N.~Cesa-Bianchi, R.~Garnett (eds.) Proc.~32nd International Conference on
  Neural Information Processing Systems, \emph{Advances in Neural Information
  Processing Systems}, vol.~31, pp. 6571--6583. Montr\'eal, Canada (2018)

\bibitem{CP16}
Chen, Y., Pock, T.: Trainable nonlinear reaction diffusion: A flexible
  framework for fast and effective image restoration.
\newblock IEEE Transactions on Pattern Analysis and Machine Intelligence
  \textbf{39}(6), 1256--1272 (2016)

\bibitem{CP20}
Combettes, P.L., Pesquet, J.: Deep neural network structures solving
  variational inequalities.
\newblock Set-Valued and Variational Analysis \textbf{28}(3), 491--518 (2020)

\bibitem{CP20a}
Combettes, P.L., Pesquet, J.: Lipschitz certificates for layered network
  structures driven by averaged activation operators.
\newblock SIAM Journal on Mathematics of Data Science \textbf{2}(2), 529--557
  (2020)

\bibitem{CAH19}
Croce, F., Andriushchenko, M., Hein, M.: Provable robustness of {ReLU} networks
  via maximization of linear regions.
\newblock In: K.~Chaudhuri, M.~Sugiyama (eds.) Proc.~22nd International
  Conference on Artificial Intelligence and Statistics, \emph{Proceedings of
  Machine Learning Research}, vol.~89, pp. 2057--2066. Okinawa, Japan (2019)

\bibitem{DDFH21}
Daubechies, I., {DeVore}, R., Foucart, S., Hanin, B., Petrova, G.: Nonlinear
  approximation and (deep) {ReLU} networks.
\newblock Constructive Approximation  (2021).
\newblock Online first

\bibitem{DMNP93}
{De Felice}, P., Marangi, C., Nardulli, G., Pasquariello, G., Tedesco, L.:
  Dynamics of neural networks with non-monotone activation function.
\newblock Network: Computation in Neural Systems \textbf{4}(1), 1--9 (1993)

\bibitem{DWB09}
Didas, S., Weickert, J., Burgeth, B.: Properties of higher order nonlinear
  diffusion filtering.
\newblock Journal of Mathematical Imaging and Vision \textbf{35}, 208--226
  (2009)

\bibitem{DYLM17}
Dong, H., Yang, G., Liu, F., Mo, Y., Guo, Y.: Automatic brain tumor detection
  and segmentation using {U-Net} based fully convolutional networks.
\newblock In: M.V. Hern{\'{a}}ndez, V.~Gonz{\'{a}}lez{-}Castro (eds.) Medical
  Image Understanding and Analysis -- {MIUA 2017}, \emph{Communications in
  Computer and Information Science}, vol. 723, pp. 506--517. Springer, Cham
  (2017)

\bibitem{DF53}
{Du Fort}, E.C., Frankel, S.P.: Stability conditions in the numerical treatment
  of parabolic differential equations.
\newblock Mathematical Tables and Other Aids to Computation \textbf{7},
  135--152 (1953)

\bibitem{DSBP21}
Duits, R., Smets, B., Bekkers, E., Portegies, J.: Equivariant deep learning via
  morphological and linear scale space {PDEs} on the space of positions and
  orientations.
\newblock In: A.~Elmoataz, J.~Fadili, Y.~Qu\'eau, J.~Rabin, L.~Simon (eds.)
  Scale Space and Variational Methods in Computer Vision, \emph{Lecture Notes
  in Computer Science}, vol. 12679, pp. 27--39. Springer, Cham (2021)

\bibitem{EHJ20}
E, W., Han, J., Jentzen, A.: Algorithms for solving high dimensional {PDEs}:
  From nonlinear {Monte Carlo} to machine learning.
\newblock {arXiv:2008.13333v2~[math.NA]} (2020)

\bibitem{EERT20}
Eliasof, M., Ephrath, J., Ruthotto, R., Treister, E.: Multigrid-in-{C}hannels
  neural network architectures.
\newblock {arXiv:2011.09128v2~[cs.CV]} (2020)

\bibitem{ESO18}
Esser, P., Sutter, E., Ommer, B.: A variational {U-Net} for conditional
  appearance and shape generation.
\newblock In: Proc.~2018 IEEE Conference on Computer Vision and Pattern
  Recognition, pp. 8857--8866. IEEE Computer Society Press, Salt Lake City, UT
  (2018)

\bibitem{GWWB08}
Gali\'c, I., Weickert, J., Welk, M., Bruhn, A., Belyaev, A., Seidel, H.P.:
  Image compression with anisotropic diffusion.
\newblock Journal of Mathematical Imaging and Vision \textbf{31}(2--3),
  255--269 (2008)

\bibitem{GMM20}
Genzel, M., Macdonald, J., M\"arz, M.: Solving inverse problems with deep
  neural networks -- robustness included?
\newblock {arXiv:2011.04268v1~[cs.LG]} (2020)

\bibitem{Ge30}
Gerschgorin, S.: Fehlerabsch\"atzung f\"ur das {D}ifferenzenverfahren zur
  {L}\"osung {P}artieller {D}ifferentialgleichungen.
\newblock Zeitschrift f\"ur Angewandte Mathematik und Mechanik \textbf{10},
  373--382 (1930)

\bibitem{GZS01}
Gilboa, G., Zeevi, Y., Sochen, N.: Image enhancement segmentation and denoising
  by time dependent nonlinear diffusion processes.
\newblock In: Proc.~2001 IEEE International Conference on Image Processing,
  vol.~3, pp. 134--137. IEEE Computer Society Press, Thessaloniki, Greece
  (2001)

\bibitem{GFE21}
Golts, A., Freedman, D., Elad, M.: Deep energy: Task driven training of deep
  neural networks.
\newblock IEEE Journal of Selected Topics in Signal Processing \textbf{15}(2),
  324--338 (2021)

\bibitem{GWMC13}
Goodfellow, I., Warde-Farley, D., Mirza, M., Courville, A., Bengio, Y.: Maxout
  networks.
\newblock In: S.~Dasgupta, D.~Mc{A}llester (eds.) Proc.~30th International
  Conference on Machine Learning, \emph{Proceedings of Machine Learning
  Research}, vol.~28, pp. 1319--1327. Atlanta, GA (2013)

\bibitem{GBC16}
Goodfellow, I.J., Bengio, Y., Courville, A.: Deep Learning.
\newblock {MIT} Press, Cambridge, MA (2016)

\bibitem{GSS15}
Goodfellow, I.J., Shlens, J., Szegedy, C.: Explaining and harnessing
  adversarial examples.
\newblock In: Y.~Bengio, Y.~Le{C}un (eds.) Proc.~3rd International Conference
  on Learning Representations. San Diego, CA (2015)

\bibitem{GG76}
Gottlieb, D., Gustafsson, B.: Generalized {Du Fort--Frankel} methods for
  parabolic initial-boundary value problems.
\newblock SIAM Journal on Numerical Analysis \textbf{13}(1), 129--144 (1875)

\bibitem{GFPC21}
Gouk, H., Frank, E., Pfahringer, B., Cree, M.J.: Regularisation of neural
  networks by enforcing {L}ipschitz continuity.
\newblock Machine Learning \textbf{110}, 393--416 (2021)

\bibitem{GGKY19}
Greenfeld, D., Galun, M., Kimmel, R., Yavneh, I., Basri, R.: Learning to
  optimize multigrid {PDE} solvers.
\newblock In: K.~Chaudhuri, R.~Salakhutdinov (eds.) Proc.~36th International
  Conference on Machine Learning, \emph{Proceedings of Machine Learning
  Research}, vol.~97, pp. 2415--2423. Long Beach, CA (2019)

\bibitem{GKNV21}
Gribonval, R., Kutyniok, G., Nielsen, M., Voigtlaender, F.: Approximation
  spaces of deep neural networks.
\newblock Constructive Approximation  (2021).
\newblock Online first

\bibitem{GRSC20}
G\"unther, S., Ruthotto, L., Schroder, J.B., Cyr, E.C., Gauger, N.R.:
  Layer-parallel training of deep residual neural networks.
\newblock SIAM Journal on Mathematics of Data Science \textbf{2}(1), 1--23
  (2020)

\bibitem{GKDC21}
Gusak, J., Katrutsa, A., Daulbaev, T., Cichocki, A., Oseledets, I.: Meta-solver
  for neural ordinary differential equations.
\newblock {arXiv:2103.08561v1~[cs.LG]} (2021)

\bibitem{HLTR19}
Haber, E., Lensink, K., Treister, E., Ruthotto, L.: {IMEXnet} a forward stable
  deep neural network.
\newblock In: K.~Chaudhuri, R.~Salakhutdinov (eds.) Proc.~36th International
  Conference on Machine Learning, \emph{Proceedings of Machine Learning
  Research}, vol.~97, pp. 2525--2534. Long Beach, CA (2019)

\bibitem{HR17}
Haber, E., Ruthotto, L.: Stable architectures for deep neural networks.
\newblock Inverse Problems \textbf{34}(1), article no. 014004 (2017)

\bibitem{HRHJ18}
Haber, E., Ruthotto, L., Holtham, E., Jun, S.H.: Learning across
  scales---multiscale methods for convolution neural networks.
\newblock In: Proc.~32nd AAAI Conference on Artificial Intelligence, pp.
  2811--2818. New Orleans, LA (2018)

\bibitem{Hac85}
Hackbusch, W.: Multigrid Methods and Applications.
\newblock Springer, New York (1985)

\bibitem{HOWR16}
Hafner, D., Ochs, P., Weickert, J., Rei{\ss}el, M., Grewenig, S.: {FSI}
  schemes: Fast semi-iterative solvers for {PDEs} and optimisation methods.
\newblock In: B.~Rosenhahn, B.~Andres (eds.) Pattern Recognition, \emph{Lecture
  Notes in Computer Science}, vol. 9796, pp. 91--102. Springer, Cham (2016)

\bibitem{HLMR20}
Hartmann, D., Lessig, C., Margenberg, N., Richter, T.: A neural network
  multigrid solver for the {Navier-Stokes} equations.
\newblock {arXiv:2008.11520v1~[physics.comp-ph]} (2020)

\bibitem{HX19}
He, J., Xu, J.: Mg{N}et: A unified framework of multigrid and convolutional
  neural network.
\newblock Science China Mathematics \textbf{62}, 1331--1354 (2019)

\bibitem{HZRS16}
He, K., Zhang, X., Ren, S., Sun, J.: Deep residual learning for image
  recognition.
\newblock In: Proc.~2016 IEEE Conference on Computer Vision and Pattern
  Recognition, pp. 770--778. IEEE Computer Society Press, Las Vegas, NV (2016)

\bibitem{Ho82}
Hopfield, J.J.: Neural networks and physical systems with emergent collective
  computational abilities.
\newblock Proceedings of the National Academy of Sciences \textbf{79}(8),
  2554--2558 (1982)

\bibitem{HLMW17}
Huang, G., Liu, Z., {van der Maaten}, L., Weinberger, K.Q.: Densely connected
  convolutional networks.
\newblock In: Proc.~2017 IEEE Conference on Computer Vision and Pattern
  Recognition, pp. 4700--4708. IEEE Computer Society Press, Honolulu, {HI}
  (2017)

\bibitem{Ii62}
Iijima, T.: Basic theory on normalization of pattern (in case of typical
  one-dimensional pattern).
\newblock Bulletin of the Electrotechnical Laboratory \textbf{26}, 368--388
  (1962).
\newblock In Japanese

\bibitem{KDO17}
Katrutsa, A., Daulbaev, T., Oseledets, I.: Black-box learning of multigrid
  parameters.
\newblock Journal of Computational and Applied Mathematics \textbf{368}, 112524
  (2020)

\bibitem{KB14}
Kingma, D.P., Ba, J.: Adam: A method for stochastic optimization.
\newblock {arXiv:1412.6980v1~[cs.LG]} (2014)

\bibitem{KEKP20}
Kobler, E., Effland, A., Kunisch, K., Pock, T.: Total deep variation for linear
  inverse problems.
\newblock In: Proc.~2020 IEEE Computer Society Conference on Computer Vision
  and Pattern Recognition, pp. 7549--7558. IEEE Computer Society Press,
  Seattle, WA (2020)

\bibitem{KKHP17}
Kobler, E., Klatzer, T., Hammernik, K., Pock, T.: Variational networks:
  Connecting variational methods and deep learning.
\newblock In: V.~Roth, T.~Vetter (eds.) Pattern Recognition, \emph{Lecture
  Notes in Computer Science}, vol. 10496, pp. 281--293. Springer, Cham (2017)

\bibitem{KSFR07}
K\"ostler, H., St\"urmer, M., Freundl, C., R\"ude, U.: {PDE} based video
  compression in real time.
\newblock Tech. Rep. 07-11, Lehrstuhl f\"ur Informatik 10, Univ.
  Erlangen--N\"urnberg, Germany (2007)

\bibitem{KPRS21}
Kutyniok, G., Petersen, P., Raslan, M., Schneider, R.: A theoretical analysis
  of deep neural networks and parametric {PDEs}.
\newblock Constructive Approximation  (2021).
\newblock Online first

\bibitem{LBH15}
Le{C}un, Y., Bengio, Y., Hinton, G.: Deep learning.
\newblock Nature \textbf{521}, 436--444 (2015)

\bibitem{LBBH98}
Le{C}un, Y., Bottou, L., Bengio, Y., Haffner, P.: Gradient-based learning
  applied to document recognition.
\newblock Proceedings of the IEEE \textbf{86}(11), 2278--2324 (1998)

\bibitem{LWF21}
Leino, K., Wang, Z., Fredrikson, M.: Globally-robust neural networks.
\newblock {arXiv:2102.08452v1~[cs.LG]} (2021)

\bibitem{LHL20}
Li, M., He, L., Lin, Z.: Implicit {E}uler skip connections: {E}nhancing
  adversarial robustness via numerical stability.
\newblock In: H.~{Daum\'e III}, A.~Singh (eds.) Proc.~37th International
  Conference on Machine Learning, \emph{Proceedings of Machine Learning
  Research}, vol. 119, pp. 5874--5883. Vienna, Austria (2020)

\bibitem{LLD19}
Long, Z., Lu, Y., Dong, B.: {PDE}-net 2.0: Learning {PDE}s from data with a
  numeric-symbolic hybrid deep network.
\newblock Journal of Computational Physics \textbf{399}(2197), 108925 (2019)

\bibitem{LZLD18}
Lu, Y., Zhong, A., Li, Q., Dong, B.: Beyond finite layer neural networks:
  Bridging deep architectures and numerical differential equations.
\newblock In: J.~Dy, A.~Krause (eds.) Proc.~35th International Conference on
  Machine Learning, \emph{Proceedings of Machine Learning Research}, vol.~80,
  pp. 3276--3285. Stockholm, Sweden (2018)

\bibitem{MHWT12}
Mainberger, M., Hoffmann, S., Weickert, J., Tang, C.H., Johannsen, D., Neumann,
  F., Doerr, B.: Optimising spatial and tonal data for homogeneous diffusion
  inpainting.
\newblock In: A.M. Bruckstein, B.~ter Haar~Romeny, A.M. Bronstein, M.M.
  Bronstein (eds.) Scale Space and Variational Methods in Computer Vision,
  \emph{Lecture Notes in Computer Science}, vol. 6667, pp. 26--37. Springer,
  Berlin (2012)

\bibitem{MR94}
Meilijson, I., Ruppin, E.: Optimal signalling in attractor neural networks.
\newblock In: G.~Tesauro, D.~Touretzky, T.~Leen (eds.) Proc.~7th International
  Conference on Neural Information Processing Systems, \emph{Advances in Neural
  Information Processing Systems}, vol.~7, pp. 485--492. Denver, CO (1994)

\bibitem{Mi20}
Misra, D.: Mish: A self regularized non-monotonic activation function.
\newblock {arXiv:1908.08681v3~[cs.LG]} (2020)

\bibitem{NH10}
Nair, V., Hinton, G.E.: Rectified linear units improve restricted {B}oltzmann
  machines.
\newblock In: Proc.~27th International Conference on Machine Learning, pp.
  807--814. Haifa, Israel (2010)

\bibitem{Ne83}
Nesterov, Y.: A method for solving the convex programming problem with
  convergence rate {$O(1/k^2)$}.
\newblock Soviet Mathematics Doklady \textbf{4}, 1035--1038 (1963)

\bibitem{NYD16}
Newell, A., Yang, K., Deng, J.: Stacked hourglass networks for human pose
  estimation.
\newblock In: B.~Leibe, J.~Matas, N.~Sebe, M.~Welling (eds.) Computer Vision --
  {ECCV} 2016, \emph{Lecture Notes in Computer Science}, vol. 9912, pp.
  483--499. Springer, Cham (2016)

\bibitem{OMLM18}
Ochs, P., Meinhardt, T., Leal-Taixe, L., M\"oller, M.: Lifting layers: Analysis
  and applications.
\newblock In: V.~Ferrari, M.~Herbert, C.~Sminchisescu, Y.~Weiss (eds.) Computer
  Vision -- {ECCV} 2018, \emph{Lecture Notes in Computer Science}, vol. 11205,
  pp. 53--68. Springer, Cham (2018)

\bibitem{OKHT21}
Ott, K., Katiyar, P., Hennig, P., Tiemann, M.: {ResNet} after all? {N}eural
  {ODEs} and their numerical solution.
\newblock In: Proc.~9th International Conference on Learning Representations.
  Vienna, Austria (2021)

\bibitem{OPF19}
Ouala, S., Pascual, A., Fablet, R.: Residual integration neural network.
\newblock In: Proc.~2019 IEEE International Conference on Acoustics, Speech and
  Signal Processing, pp. 3622--3626. IEEE Computer Society Press, Brighton, UK
  (2019)

\bibitem{PN21}
Parhi, R., Nowak, R.D.: What kinds of functions do deep neural networks learn?
  {I}nsights from variational spline theory.
\newblock {arXiv:2105.03361v1~[stat.ML]} (2021)

\bibitem{PM90}
Perona, P., Malik, J.: Scale space and edge detection using anisotropic
  diffusion.
\newblock IEEE Transactions on Pattern Analysis and Machine Intelligence
  \textbf{12}, 629--639 (1990)

\bibitem{Po64}
Polyak, B.T.: Some methods of speeding up the convergence of iteration methods.
\newblock {USSR} Computational Mathematics and Mathematical Physics
  \textbf{4}(5), 1--17 (1964)

\bibitem{RMMW20}
Rackauckas, C., Ma, Y., Martensen, J., Warnter, C., Zubov, K., Supekar, R.,
  Skinner, D., Ramadhan, A., Edelman, A.: Universal differential equations for
  scientific machine learning.
\newblock {arXiv:2001.04385v3~[cs.LG]} (2020)

\bibitem{RPK19}
Raissi, M., Perdikaris, P., Karniadakis, G.E.: Physics-informed neural
  networks: A deep learning framework for solving forward and inverse problems
  involving nonlinear partial differential equations.
\newblock Journal of Computational Physics \textbf{378}, 686--707 (2019)

\bibitem{RZL17}
Ramachandran, P., Zoph, B., Le, Q.V.: Searching for activation functions.
\newblock {arXiv:1710.05941v2~[cs.NE]} (2017)

\bibitem{RZ09}
Ratner, V., Zeevi, Y.Y.: The dynamics of image processing viewed as damped
  elastic deformation.
\newblock In: Proc.~17th European Signal Processing Conference, pp. 45--49.
  IEEE Computer Society Press, Glasgow, UK (2009)

\bibitem{RT18}
Rolnick, D., Tegmark, M.: The power of deeper networks for expressing natural
  functions.
\newblock In: Proc.~6th International Conference on Learning Representations.
  Vancouver, Canada (2018)

\bibitem{RFB15}
Ronneberger, O., Fischer, P., Brox, T.: U-net: Convolutional networks for
  biomedical image segmentation.
\newblock In: N.~Navab, J.~Hornegger, W.~Wells, A.~Frangi (eds.) Medical Image
  Computing and Computer-Assisted Intervention -- {MICCAI} 2015, \emph{Lecture
  Notes in Computer Science}, vol. 9351, pp. 234--241. Springer, Cham (2015)

\bibitem{RDF20}
Rousseau, F., Drumetz, L., Fablet, R.: Residual networks as flows of
  diffeomorphisms.
\newblock Journal of Mathematical Imaging and Vision \textbf{62}, 365--375
  (2020)

\bibitem{ROF92}
Rudin, L.I., Osher, S., Fatemi, E.: Nonlinear total variation based noise
  removal algorithms.
\newblock Physica D \textbf{60}(1--4), 259--268 (1992)

\bibitem{RBPK17}
Rudy, S.H., Brunton, S.L., Proctor, J.L., Kutz, J.N.: Data-driven discovery of
  partial differential equations.
\newblock Science Advances \textbf{3}(4), article no. e1602614 (2017)

\bibitem{RM86}
Rumelhart, D.E., {McClelland}, J.L.: Parallel distributed processing:
  Explorations in the Microstructure of Cognition.
\newblock {MIT} Press, Cambridge, MA (1986)

\bibitem{RDSK15}
Russakovsky, O., Deng, J., Su, H., Krause, J., Satheesh, S., Huang, Z.,
  Karpathy, A., Khosla, A., Bernstein, M., Berg, A.C., Fei-Fei, L.: {ImageNet}
  large scale visual recognition challenge.
\newblock International Journal of Computer Vision \textbf{115}, 211--252
  (2015)

\bibitem{RH20}
Ruthotto, L., Haber, E.: Deep neural networks motivated by partial differential
  equations.
\newblock Journal of Mathematical Imaging and Vision \textbf{62}, 352--364
  (2020)

\bibitem{Sa03}
Saad, Y.: Iterative Methods for Sparse Linear Systems, second edn.
\newblock {SIAM}, Philadelphia (2003)

\bibitem{SK16}
Salimans, T., Kingma, D.P.: Weight normalization: A simple reparameterization
  to accelerate training of deep neural networks.
\newblock {arXiv:1602.07868v3~[cs.LG]} (2016)

\bibitem{Sch17}
Schaeffer, H.: Learning partial differential equations via data discovery and
  sparse optimization.
\newblock Proceedings of the Royal Society of London, Series A
  \textbf{473}(2197), article no. 20160446 (2017)

\bibitem{SchW98}
Scherzer, O., Weickert, J.: Relations between regularization and diffusion
  filtering.
\newblock Journal of Mathematical Imaging and Vision \textbf{12}(1), 43--63
  (2000)

\bibitem{SPME14}
Schmaltz, C., Peter, P., Mainberger, M., Ebel, F., Weickert, J., Bruhn, A.:
  Understanding, optimising, and extending data compression with anisotropic
  diffusion.
\newblock International Journal of Computer Vision \textbf{108}(3), 222--240
  (2014)

\bibitem{Sch15a}
Schmidhuber, J.: Deep learning in neural networks: An overview.
\newblock Neural Networks \textbf{61}, 85--117 (2015)

\bibitem{SMDH13}
Sutskever, I., Martens, J., Dahl, G., Hinton, G.: On the importance of
  initialization and momentum in deep learning.
\newblock In: S.~Dasgupta, D.~Mc{A}llester (eds.) Proc.~30th International
  Conference on Machine Learning, \emph{Proceedings of Machine Learning
  Research}, vol.~28, pp. 1139--1147. Atlanta, GA (2013)

\bibitem{TG19}
Thorpe, M., {van Gennip}, Y.: Deep limits of residual neural networks.
\newblock {arXiv:1810.11741v2~[math.CA]} (2019)

\bibitem{Tr08}
Tretter, C.: Spectral Theory of Block Operator Matrices and Applications.
\newblock Imperial College Press, London (2008)

\bibitem{HS80}
{van Der Houwen}, P.J., Sommeijer, B.P.: On the internal stability of explicit,
  m-stage {Runge-Kutta} methods for large m-values.
\newblock Zeitschrift f\"ur angewandte {M}athematik und {M}echanik
  \textbf{60}(10), 479--485 (1980)

\bibitem{We94e}
Weickert, J.: Theoretical foundations of anisotropic diffusion in image
  processing.
\newblock Computing Supplement \textbf{11}, 221--236 (1996)

\bibitem{We97}
Weickert, J.: Anisotropic Diffusion in Image Processing.
\newblock Teubner, Stuttgart (1998)

\bibitem{WB97}
Weickert, J., Benhamouda, B.: A semidiscrete nonlinear scale-space theory and
  its relation to the {P}erona--{M}alik paradox.
\newblock In: F.~Solina, W.G. Kropatsch, R.~Klette, R.~Bajcsy (eds.) Advances
  in Computer Vision, pp. 1--10. Springer, Wien (1997)

\bibitem{WW06}
Weickert, J., Welk, M.: Tensor field interpolation with {PDEs}.
\newblock In: J.~Weickert, H.~Hagen (eds.) Visualization and Processing of
  Tensor Fields, pp. 315--325. Springer, Berlin (2006)

\bibitem{WWW13}
Weickert, J., Welk, M., Wickert, M.: ${L}^2$-stable nonstandard finite
  differences for anisotropic diffusion.
\newblock In: A.~Kuijper, K.~Bredies, T.~Pock, H.~Bischof (eds.) Scale Space
  and Variational Methods in Computer Vision, \emph{Lecture Notes in Computer
  Science}, vol. 7893, pp. 390--391. Springer, Berlin (2013)

\bibitem{YK00}
You, Y.L., Kaveh, M.: Fourth-order partial differential equations for noise
  removal.
\newblock IEEE Transactions on Image Processing \textbf{9}(10), 1723--1730
  (2000)

\bibitem{ZS20}
Zhang, L., Schaeffer, H.: Forward stability of {R}es{N}et and its variants.
\newblock Journal of Mathematical Imaging and Vision \textbf{62}, 328--351
  (2020)

\bibitem{ZCF19}
Zhu, M., Chang, B., Fu, C.: Convolutional neural networks combined with
  {Runge-Kutta} methods.
\newblock In: Proc.~7th International Conference on Learning Representations.
  New Orleans, LA (2019)

\bibitem{ZMWZ21}
Zhu, M., Min, W., Wang, Q., Zou, S., Chen, X.: {PFLU} and {FPFLU}: Two novel
  non-monotonic activation functions in convolutional neural networks.
\newblock Neurocomputing \textbf{429}, 110--117 (2021)

\bibitem{ZBS20}
Zou, D., Balan, R., Singh, M.: On {L}ipschitz bounds of general convolutional
  neural networks.
\newblock IEEE Transactions on Information Theory \textbf{66}(3), 1783--1759
  (2020)

\end{thebibliography}

\end{document}